\newtheorem{theorem}{Theorem}[section]
\newtheorem{lemma}[theorem]{Lemma}
\newtheorem{remark}[theorem]{Remark}
\newtheorem{hypothesis}[theorem]{Hypothesis}
\def \Imm {\mathbb{I}}
\def \Rm {\mathbb{R}}
\def\C{\mathcal{C}}
\def\F{\mathcal{F}}
\def\L{\mathcal{L}}
\def\N{\mathcal{N}}
\newcommand{\bc}{\mathbf c}
\newcommand{\be}{\mathbf e}
\newcommand{\bg}{\mathbf g}
\newcommand{\bu}{\mathbf u}
\newcommand{\tdiv}{\text{div }}
\newcommand{\where}{\quad\text{ where }}
\newcommand{\bfe}{ {\bf e}}
\newcommand{\bff}{ {\bf f}}
\newcommand{\bfg}{ {\bf g}}
\newcommand{\bfm}{ {\bf m}}
\newcommand{\bfu}{ {\bf u}}
\newcommand{\tr}{ {\text{tr }}}
\newcommand{\cout}[1]{}
\newcommand{\dprod}[2]{#1 : #2}
\def \eps {\epsilon}
\def \x {\mathrm{x}}
\def \d {D_V}
\newcommand{\wtC}{\widetilde{C}}
\title{Reconstruction of a fully anisotropic elasticity tensor from knowledge of displacement fields}
\author{Guillaume Bal\thanks{Department of Applied Physics and Applied Mathematics, Columbia University,  New York NY, 10027; gb2030@columbia.edu} \and Fran\c cois Monard\thanks{Department of Mathematics, University of Washington; fmonard@math.washington.edu} \and Gunther Uhlmann\thanks{Department of Mathematics, University of Washington; Hong Kong University of Science and Technology; gunther@math.washington.edu}}
\begin{document}

\maketitle

\begin{abstract}
    We present explicit reconstruction algorithms for fully anisotropic unknown elasticity tensors from knowledge of a finite number of internal displacement fields, with applications to transient elastography. Under certain rank-maximality assumptions satified by the strain fields, explicit algebraic reconstruction formulas are provided. A discussion ensues on how to fulfill these assumptions, describing the range of validity of the approach. We also show how the general method can be applied to more specific cases such as the transversely isotropic one.  
\end{abstract}

\section{Introduction} 

We consider the reconstruction of a fully anisotropic elasticity tensor $C = \{C_{ijkl}\}_{1\le i,j,k,l\le 3}$ from knowledge of a finite number of displacement fields $\{\bu^{(j)}\}_{j\in J}$, solutions of the system of linear elasticity
\begin{align}
\nabla\cdot (C: (\nabla \bu+(\nabla\bu)^T)) = 0 \qquad (X), \qquad \bu|_{\partial X} = \bg \quad (\text{prescribed}).
    \label{eq:elasticity}
\end{align}

Applications for such a theory include the medical imaging modality called elastography. Elastography is concerned with the reconstruction of the elastic properties in biological tissues. It has been observed experimentally that certain biological tissues (e.g., muscle fiber \cite{Gennisson2010}, or white matter inside the brain \cite{Feng2013}) display anisotropic mechanical properties, and the present article aims at giving access to these anisotropic features. The present approach to elastography consists of two steps. A first step is the reconstruction of the internal elastic displacements, which are accurately modeled as solutions of the above system of linear elasticity. Two different methods are typically used to reconstruct such displacements. One such method, called Ultrasound Elastography, consists of probing the elastic displacements by sound waves. We refer the reader to \cite{GCCF-JASA-03,OCPYL-UI-91,Oetal-JMU-02,STCF-IEEE-02} for more details on the method. A recent analysis of such reconstructions from ultrasound measurements is performed in \cite{BI-SIIMS-14}. A second method, called Magnetic Resonance Elastrography, leverages the displacement of protons by propagating elastic signals to reconstruct the elastic waves by magnetic resonance imaging \cite{JFK-SIAP-11,Kal-PMB-00,MLRGE-S-95}. 

In this paper, we assume the first step done and consider the quantitative reconstruction of elastic coefficients from knowledge of such elastic displacements. For references on this quantitative step of elastography, we refer the reader to \cite{Barbone2010,Bal2013b,MZM-IP-10}. 
Here, we generalize earlier work performed for scalar second-order equations \cite{Bal2012,Bal2012a} to the case of systems, and we generalize recent work on the Lam\'e system \cite{Bal2013b,Lai2013} to the fully anisotropic setting. 
%Applications in transient elastography \cite{}.
%
%Reconstruction of the Lam\'e parameters was previously studied in \cite{}.
%
%The main scheme of ideas follows a work by the first and third authors, who considered the reconstruction of (possibly complex-valued) constitutive parameters of a scalar elliptic second-order PDE from knowledge of a finite collection of its solutions, see \cite{}. 
%The present work marks a generalization from the scalar case to the case of systems. 
Other hybrid inversions for elliptic systems can be found in the case of Maxell's system in \cite{Bal2013d,Bal2014a} and in a more general framework in \cite{B-CM-14}.

The approach consists in deriving explicit algebraic formulas reconstructing the unknown parameters locally, based on hypotheses of linear independence (or rank maximality) of functionals of the measurements and their partial derivatives. A discussion then follows on what regularity or property (e.g., the Runge approximation property) is required {\it a priori} on the unknown parameters so that the hypotheses of reconstructibility may be fulfilled. The idea of constructing local solutions fulfilling certain maximality conditions, which are then controlled from the boundary of the domain via Runge approximation, was also used in the context of reconstruction of conductivity tensors from knowledge of so-called {\em power density} functionals \cite{Monard2012a} or {\em current density} functionals \cite{Bal2013}.

The above problem of reconstruction of coefficients in partial differential equations from knowledge of internal functionals is sometimes referred to as a hybrid inverse problems; see \cite{A-Sp-08,AGKL-QAM-08,AS-IP-12,B-CM-14,KS-IP-12,S-SP-2011} for reference for hybrid inverse problems in other imaging modalities. Note that we do not consider here the inverse boundary elasticity problem, for which Lipschitz stability estimates may be obtained only when the Lam\'e coefficients are piece-wise constant; see for instance \cite{BFV-IPI-14} for a recent reference on such a topic not covered here.

We now give the main results of the paper in the next section and give an outline of the remainder of the paper there.  

\section{Main results}\label{sec:main} 

\paragraph{Preliminary notation and definitions.} In what follows, we denote by $M_3(\Rm)$ the vector space of $3\times 3$ real matrices with inner product $A:B := \tr (AB^T) = \sum_{i,j=1}^3 A_{ij}B_{ij}$, with respect to which we recall the orthogonal decomposition $M_3(\Rm) = S_3(\Rm) \oplus A_3(\Rm)$, where the first (second) summand denotes (skew-)symmetric matrices. 

Let us fix $X\subset \Rm^3$ a bounded domain with smooth boundary for the remainder of the paper. An {\em elasticity tensor} $C$ is a fourth-order tensor satisfying the following symmetries
\begin{align*}
    C_{ijkl} = C_{jikl}, \qquad C_{ijkl} = C_{ijlk}, \qquad C_{ijkl} = C_{klij}, \qquad 1\le i,j,k,l\le 3,
\end{align*}
characterized by 21 independent components (instead of 81), where the latter symmetry corresponds to the assumption that $C$ is {\em hyperelastic}. We assume below that $C$ is {\em uniformly pointwise stable} over $X$ \cite[Ch. 6, Def. 1.9]{Marsden1983} in the sense that there is a $\kappa>0$ such that 
\begin{align}
    \frac{1}{2} \eps:C(\x):\eps \ge \kappa\ \eps:\eps, \quad \forall\ \x\in X, \quad \forall\ \eps\in S_3(\Rm).
    \label{eq:UPS}
\end{align}
For $C = \{C_{ijkl}\}_{i,j,k,l}$ an elasticity tensor and $\eps\in S_3(\Rm)$, we denote $C:\eps := \{\sum_{k,l} C_{ijkl} \eps_{kl}\}_{i,j}$. 

In some sections below, the tensor $C$ will be represented in the non-tensorial {\em Voigt} notation, an $S_6(\Rm)$-valued function $\bc = \{\bc_{\alpha\beta}\}_{1\le \alpha,\beta \le 6}$, where the correspondence of elements $c_{\alpha\beta}$ in terms of the coefficients $C_{ijkl}$ is obtained via the double index mapping $11\mapsto 1$, $22\mapsto 2$, $33\mapsto 3$, $23,32\mapsto 4$, $13,31\mapsto 5$ and $12,21\mapsto 6$ (for instance, $\bc_{11} = C_{1111}$ and so on). Hooke's law, relating stress $\sigma$ to strain tensors $\eps$ via the relation $\sigma = C:\eps$, now reads in Voigt notation $\sigma_V = \bc\ \eps_V$, where we define 
\begin{align}
    \eps_V = (\eps_{11}, \eps_{22}, \eps_{33}, 2\eps_{23}, 2\eps_{31}, 2\eps_{12})^T, \qquad \sigma_V = (\sigma_{11}, \sigma_{22}, \sigma_{33}, \sigma_{23}, \sigma_{31}, \sigma_{12})^T. 
    \label{eq:Voigt}
\end{align}
Most often, we will drop the subscript ``$V$'' below as the context will tell us naturally what representation to pick. For $(\eps^{(1)}, \dots, \eps^{(6)})$ in $S_3(\Rm)$, we define below
\begin{align}
    \det_V (\eps^{(1)}, \dots, \eps^{(6)}) = \det_{\Rm^6} (\eps_V^{(1)}, \dots, \eps_V^{(6)}). 
    \label{eq:detv}
\end{align}

A crucial fact for further derivations is the following.
\begin{lemma} \label{lem:detc}
    Under assumption \eqref{eq:UPS}, there exists $\kappa'>0$ such that
    \begin{align}
	\det\bc(\x) \ge \kappa', \qquad \forall\ \x\in X.
	\label{eq:detc}
    \end{align}    
\end{lemma}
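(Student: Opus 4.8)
The plan is to translate the uniform pointwise stability assumption \eqref{eq:UPS} into a statement about the Voigt matrix $\bc(\x)$ and then use compactness. First I would observe that the quadratic form $\eps \mapsto \tfrac12\,\eps:C(\x):\eps$ on $S_3(\Rm)$ corresponds, under the Voigt identification $\eps \mapsto \eps_V$, to a quadratic form on $\Rm^6$ given by a symmetric $6\times 6$ matrix. The only subtlety is the factors of $2$ built into \eqref{eq:Voigt}: one checks that with the chosen conventions one has $\eps:C(\x):\eps = \eps_V^T W \bc(\x) W \eps_V$ (or an equivalent identity) for a fixed diagonal scaling matrix $W$ with entries $1$ and $\tfrac12$ accounting for the off-diagonal strain components being counted twice. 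In any case, $\eps:C(\x):\eps$ equals $\eta^T P(\x)\,\eta$ for $\eta = \eps_V$ and a symmetric matrix $P(\x)$ that differs from $\bc(\x)$ only by conjugation by a fixed invertible diagonal matrix, so $\det P(\x)$ and $\det\bc(\x)$ have the same sign and are comparable up to a fixed positive constant.

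Next I would note that the map $\eps \mapsto \eps_V$ is a linear isomorphism $S_3(\Rm)\to\Rm^6$, so there is a constant $c_0>0$ with $\eps:\eps \ge c_0\,\eps_V^T\eps_V$ for all $\eps\in S_3(\Rm)$. Combining this with \eqref{eq:UPS} gives $\eta^T P(\x)\eta \ge 2\kappa c_0\,\eta^T\eta$ for all $\eta\in\Rm^6$ and all $\x\in X$, i.e. the smallest eigenvalue of the symmetric matrix $P(\x)$ is bounded below by $2\kappa c_0>0$ uniformly in $\x$. Hence $\det P(\x) \ge (2\kappa c_0)^6 > 0$ for every $\x\in X$, and therefore $\det\bc(\x) \ge \kappa'$ for a suitable $\kappa'>0$ depending only on $\kappa$ and on the fixed scaling matrix $W$. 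This argument does not even require smoothness of $\x\mapsto C(\x)$ or boundedness of $X$; the pointwise ellipticity bound alone yields the uniform determinant lower bound.

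The only real point requiring care — the "main obstacle," though it is minor — is bookkeeping the Voigt factors of $2$ correctly so that the claimed comparison between the quadratic form $\eps:C:\eps$ and the matrix $\bc$ is the right one; a careless choice of conventions would put the factors of $2$ in the wrong places and one would be proving positivity of the wrong matrix. I would therefore state explicitly the identity relating $\eps:C:\eps$ to $\bc$ via the diagonal matrix $W = \mathrm{diag}(1,1,1,2,2,2)$ (acting on $\eps_V$) or equivalently $W^{-1}$ on the stress side, verify it on the rank-one strains $\eps = \tfrac12(\be_i\otimes\be_j + \be_j\otimes\be_i)$, and then the eigenvalue/determinant conclusion follows immediately by the spectral theorem for symmetric matrices.
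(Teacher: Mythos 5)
Your proof is correct, and it takes a more elementary, self-contained route than the paper. The paper invokes the eigentensor theory of Mehrabadi--Cowin: the eigenvalues of $C$ acting on $S_3(\Rm)$ coincide with those of the Kelvin--Mandel matrix $\bc' = \{2^{(\chi(i)+\chi(j))/2}c_{ij}\}$, all of which are bounded below by $\kappa$ under \eqref{eq:UPS}, and then $\det\bc = \tfrac18\det\bc' \ge \kappa^6/8$. You instead work directly with the quadratic form: $\eps:C:\eps$ is a positive definite quadratic form in $\eps_V$, uniformly in $\x$, whose matrix is congruent to $\bc$ by a fixed diagonal scaling, so the determinant lower bound follows from the spectral theorem alone, with no external citation. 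The two arguments rest on the same linear-algebraic fact (uniform positive definiteness of a fixed diagonal congruate of $\bc$), but yours avoids the eigentensor machinery and, as you note, uses only the pointwise bound \eqref{eq:UPS} with no smoothness or compactness. Your caution about the factors of $2$ is warranted but the situation is in fact cleaner than you hedge: with the conventions \eqref{eq:Voigt} the pairing $\eps_V\cdot\sigma_V$ equals $\eps:\sigma$ exactly (the factor $2$ on the shear strains is what makes $\eps_V$ energy-conjugate to $\sigma_V$), so $\eps:C:\eps = \eps_V^T\,\bc\,\eps_V$ with no conjugating matrix $W$ at all; combined with $\eps:\eps \ge \tfrac12\,\eps_V^T\eps_V$ and the surjectivity of $\eps\mapsto\eps_V$ onto $\Rm^6$, this gives $\eps_V^T\bc(\x)\eps_V \ge \kappa\,\eps_V^T\eps_V$ and hence the slightly better constant $\det\bc(\x)\ge\kappa^6$.
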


\paragraph{Main results.} Provided linear independence conditions that can be satisfied locally and checked directly on the available measurements, we first provide an explicit reconstruction algorithm for fully anisotropic elasticity tensors. Here and below, a typical displacement field is denoted $\bfu:X\to \Rm^3$ with corresponding strain tensor $\eps: X\to S_3(\Rm)$ with components $\eps_{ij} = \frac{1}{2} (\partial_i u_j + \partial_j u_i)$, or, in coordinate-free notation, $\eps = \frac{1}{2} (\nabla \bu + (\nabla\bu)^T)$. \\
We now formulate our main hypotheses in order to set up our reconstruction procedure. These hypotheses, based on algebraic redundancies of various elasticity solutions, force the unknown tensor to lie on the orthogonal of a space generated by a rich enough set of data. The hypotheses below formulate how some functionals of the data set can be made to generate a hyperplane of $S_6(\Rm)$, a normal of which can be explicitely constructed and proved to be proportional to $C$, the proportionality factor being reconstructed at the end. As seen below, fulfilling this agenda requires $6+N$ solutions, where $N \ge \frac{d-1}{3}$ and $d$ denotes the number of scalar components of $C$ (the factor $3$ accounts for the fact that each additional solution after the sixth one provides three orthogonality constraints on $C$).

\begin{hypothesis}\label{hyp} Let $\Omega\subset X$. 
    \begin{itemize}
	\item[A.] There exist 6 solutions $\bfu^{(1)}, \dots, \bfu^{(6)}$ of \eqref{eq:elasticity} whose strain tensors form a basis of $S_3(\Rm)$ at every $\x\in \Omega$. This condition can be summarized as ($\det_V$ is defined in \eqref{eq:detv}) 
	    \begin{align}
		\inf_{\x\in\Omega} \det_V (\eps^{(1)}(\x), \dots, \eps^{(6)}(\x)) \ge c_0 >0, \quad \text{for some constant } c_0.
		\label{eq:hyp1}
	    \end{align}
	\item[B.] Assuming $A$ fulfilled, there exists $N$ additional solutions $\bfu^{6+1}, \dots, \bfu^{6+N}$ giving rise to a family $M$ of $3N$ matrices whose expressions are explicit in terms of $\{\eps^{(j)}, \partial_\alpha \eps^{(j)},\ 1\le\alpha\le 3,\ 1\le j\le 6+N\}$ (see \eqref{eq:orthogonality} for detail), and such that they span a hyperplane of $S_6(\Rm)$ at every $\x\in \Omega$. This condition can be summarized as 
	    \begin{align}
		\inf_{\x\in\Omega} \sum_{M'\subset M,\ \# M' = 20} \N(M') :\N(M') \ge c_1 >0, \quad \text{for some constant } c_1, 
		\label{eq:hyp2}
	    \end{align}
	    where $\N$ is an operator generalizing the cross-product, defined in \eqref{eq:crossprod}.
    \end{itemize}
\end{hypothesis}

\begin{remark}
    It should be noted that these hypotheses are stable under smooth perturbation of the boundary conditions generating the displacement fields $\bfu^{(1)}, \dots, \bfu^{(6+N)}$. This is because Hypotheses \ref{hyp}.A-B are expressed in terms of functionals which depend polynomially on the components of displacement fields and their derivatives up to second order, which are in turn continuous functionals of their boundary conditions (see, e.g., Sec. \ref{sec:prelim} for appropriate topologies). 
\end{remark}

\begin{remark}
    In Hypothesis \ref{hyp}.B, the number $N$ depends on the type of isotropy of the tensor $C$. In the most general, $21$-parameter case, spanning a hyperplane of $S_6(\Rm)$ with $3N$ additional constraints suggests $N\ge 7$, and the total number of displacement fields needed is then $6+7=13$. 
\end{remark}

On an open subset $\Omega\subset X$ where Hypotheses \ref{hyp}.A-B hold, we then derive an explicit reconstruction algorithm, reconstructing $C$ over $\Omega$. This is done as follows: in Voigt notation, decompose the $S_6(\Rm)$-valued function $C$ as the product of a scalar function $\tau$ times a normalized anisotropic structure $\wtC$ such that $C = \tau\wtC$ ($\wtC$ is such that its Voigt counterpart $\tilde\bc$ has determinant $1$, so $\tau = (\det\tilde\bc)^{\frac{1}{6}}$). Algebraic manipulations allow us to obtain pointwise orthogonality constraints which, under hypotheses \eqref{eq:hyp1}-\eqref{eq:hyp2}, are numerous enough to reconstruct $\wtC$ pointwise. Once $\wtC$ is reconstructed, an equation for $\nabla\log\tau$ is derived over $\Omega$, leading to the reconstruction of $\tau$ by solving either a transport or a Poisson equation. Finally, once $C=\tau\wtC$ is known, additional stability is recovered on certain components of $C$ by deriving equations reconstructing the third-order tensor $\tdiv C$ (whose components may be written as $\partial_i C_{ijkl},\; 1\le j,k,l \le 3$). Such components, although linear combination of first-order derivatives of the components of $C$, are reconstructed with the same stability as $C$ itself. Such stability improvements have already been observed in e.g. \cite{Bal2012e,Bal2013}.

The approach just described yields unique and stable reconstructions over $\Omega$ in the sense of the following theorem. %For $\eps:\Omega\to S_3(\Rm)$ and $p\ge 0$, we define Sobolev norms $\|\eps\|_{W^{p,\infty}(\Omega)} = \sum_{1\le i\le j\le 3} \|\eps_{ij}\|_{W^{p,\infty}(\Omega)}$.

\begin{theorem} \label{thm:injstab}
    Suppose that over some open set $\Omega\subset X$, hypotheses \ref{hyp}.A-B hold for two families of displacement fields $\{\bfu^{(j)}\}_{j=1}^{6+N}$ and $\{\bfu^{'(j)}\}_{j=1}^{6+N}$ corresponding to elasticity tensors $C$ and $C'$. Then $C$ and $C'$ can each be uniquely reconstructed over $\Omega$ from knowledge of their corresponding solutions, with the following stability estimate for every integer $p\ge 0$
    \begin{align}
	\|C - C'\|_{W^{p,\infty} (\Omega)} + \|\tdiv C - \tdiv C'\|_{W^{p,\infty}(\Omega)} \le K \sum_{j=1}^{N+6} \|\eps^{(j)} - \eps^{'(j)}\|_{W^{p+1,\infty}(\Omega)}.
	\label{eq:stab}
    \end{align}
\end{theorem}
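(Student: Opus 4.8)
The plan is to follow the constructive algorithm outlined just above the theorem statement, and to check at each stage that the quantities produced depend on the strain fields in a way that is both algebraically explicit and quantitatively controlled, so that uniqueness and the stability estimate \eqref{eq:stab} come out together. First I would fix $\x\in\Omega$ and use Hypothesis \ref{hyp}.A: since $\eps^{(1)}(\x),\dots,\eps^{(6)}(\x)$ form a basis of $S_3(\Rm)$ with $\det_V$ bounded below by $c_0$, Cramer's rule expresses the coefficients of any strain in this basis as rational functions of the $\eps^{(j)}$ with denominator $\det_V\ge c_0$; in particular the equations \eqref{eq:elasticity}, after applying $\nabla\cdot$, become linear relations among the components of $\tdiv C$ and $C$ with coefficients that are smooth, bounded, and bounded-denominator rational expressions in the $\eps^{(j)}$ and $\partial_\alpha\eps^{(j)}$. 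These are the ``orthogonality constraints'' alluded to; assembling them for the $6+N$ solutions produces the family $M$ of $3N$ matrices in $S_6(\Rm)$ referenced in \eqref{eq:orthogonality}, each depending polynomially on the data and the first basis inverse.

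Next I would invoke Hypothesis \ref{hyp}.B: the $3N$ matrices span a hyperplane of $S_6(\Rm)$ uniformly, quantified by \eqref{eq:hyp2} through the generalized cross product $\N$ of \eqref{eq:crossprod}. Since $C(\x)$ is orthogonal (in the $S_6$ inner product) to every matrix in $M$, and $M$ spans a hyperplane, $C(\x)$ is determined up to scalar: concretely, $\wtC(\x)$ — the normalization with $\det\tilde\bc=1$, which is well-posed because $\det\bc\ge\kappa'>0$ by Lemma \ref{lem:detc} — equals a normalized $\N(M')$ for any $20$-element subset $M'$ realizing the sup in \eqref{eq:hyp2}, up to sign, and the sign is pinned by \eqref{eq:UPS} (positivity). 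This is an explicit algebraic formula for $\wtC$ in terms of the data, with all denominators bounded below by powers of $c_0,c_1,\kappa'$; differentiating it $p$ times and using the product/quotient rules and the uniform lower bounds gives $\|\wtC-\wtC'\|_{W^{p,\infty}}\lesssim\sum_j\|\eps^{(j)}-\eps'^{(j)}\|_{W^{p+1,\infty}}$, the extra derivative coming from the $\partial_\alpha\eps^{(j)}$ appearing in $M$. Then I recover $\tau$: substituting $C=\tau\wtC$ into \eqref{eq:elasticity} and using that $\wtC$ is now known, the divergence equations yield an explicit expression for $\nabla\log\tau$ as a bounded rational function of the $\eps^{(j)}$, $\partial_\alpha\eps^{(j)}$ and $\wtC$, $\partial_\alpha\wtC$; since $\tau$ is a scalar this is an overdetermined gradient equation, solved by integration along a path (transport) or by taking the divergence and solving a Poisson problem with, say, boundary data inherited from the known tensor near $\partial\Omega$ — either way a standard elliptic/ODE stability estimate transfers the $W^{p+1,\infty}$ control of the right-hand side to $W^{p,\infty}$ (indeed better) control of $\log\tau$, hence of $\tau$ and $C=\tau\wtC$. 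Finally, with $C$ known one more differentiation of \eqref{eq:elasticity} (or direct reading of the first-order terms already isolated) reconstructs $\tdiv C$ algebraically with the same $W^{p,\infty}$ bound, giving the second term on the left of \eqref{eq:stab}; uniqueness is immediate since every step is a formula.

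The main obstacle I anticipate is bookkeeping the quotient structure so that the constant $K$ in \eqref{eq:stab} is genuinely uniform: every stage divides by a quantity ($\det_V\ge c_0$, the cross-product norm $\ge c_1$, $\det\bc\ge\kappa'$, and $\tau$ itself, which is bounded below by \eqref{eq:UPS} and Lemma \ref{lem:detc}) and after $p$ differentiations these denominators appear to high powers; one must check that $\wtC',C',\tau'$ satisfy the \emph{same} lower bounds — this is where it matters that Hypotheses \ref{hyp}.A-B are assumed for \emph{both} families — and then a routine but careful induction on $p$, using that $W^{p,\infty}$ is a Banach algebra and that $1/f$ is $W^{p,\infty}$-stable when $f$ is bounded below, closes the estimate. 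A secondary subtlety is the sign/branch choice in normalizing $\N(M')$ to get $\wtC$: one must argue the uniform positivity in \eqref{eq:UPS} selects the correct sign continuously in $\x$, so no spurious discontinuities are introduced; and for the $\tau$ step, if one integrates a transport equation the estimate depends on $\Omega$ being reachable by bounded-length paths, which can be arranged by shrinking $\Omega$ or is sidestepped entirely by the Poisson formulation.
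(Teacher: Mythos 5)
Your proposal follows essentially the same route as the paper: decompose $C=\tau\wtC$, reconstruct $\wtC$ pointwise from the orthogonality constraints \eqref{eq:orthogonality} via the generalized cross product with the sign fixed by positivity, recover $\tau$ from the transport/Poisson equation \eqref{eq:gradtau2}, read off $\tdiv C$ algebraically, and obtain \eqref{eq:stab} by differentiating these rational formulas whose denominators are uniformly bounded below by $c_0,c_1,\kappa'$ for both families. The subtleties you flag (uniform denominators, sign continuity, the multiplicative indeterminacy in $\tau$) are exactly the ones the paper handles, e.g.\ by summing over all $20$-element subsets in \eqref{eq:Crc} rather than selecting one.
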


\begin{remark}\label{rem:stabtau} Note that if the elasticity tensor is split into the product of an unknown scalar function $\tau$ times a {\em known} anisotropic tensor, the stability of the problem of reconstructing $\tau$ from fields $\bu^{(j)}$ is better-posed, i.e. involves the loss of one derivative instead of two, according to the statement
    \begin{align}
    \begin{split}
	\|\tau - \tau'\|_{W^{p+1,\infty}(\Omega)} \le K \sum_{j=1}^{N+6} \|\eps^{(j)} - \eps^{'(j)}\|_{W^{p+1,\infty}(\Omega)}.
    \end{split}
    \label{eq:stabtau}
\end{align}   
\end{remark}

In the context of this reconstruction approach, an elasticity tensor is then reconstructible on some given open set $\Omega$ (or globally on $X$) if there exist displacement field solutions of \eqref{eq:elasticity} fulfilling Hypotheses \ref{hyp}.A-B throughout $\Omega$. In this context, classifying reconstructible elasticity tensors then consists in finding out in what situations one can construct such displacement fields. Two such situations are described in the following theorem: (i) a fully anisotropic tensor that is close to constant is globally reconstructible from well-chosen displacement fields whose traces are explicitly given, and (ii) an elasticity tensor with smooth enough components which satisfies the {\em Runge approximation property} (see Section \ref{sec:runge}, in particular, Eq. \eqref{eq:L2est}) is, in principle, locally reconstructible from knowledge of its displacement fields.

\begin{theorem}[Reconstructibility of elasticity tensors]  \label{thm:recons} In either of the following cases, there exists a non-empty open set of smooth enough boundary conditions generating displacement fields characterizing $C$ uniquely and stably in the sense of Theorem \ref{thm:injstab}.  
    \begin{itemize}
	\item[(i)] $C$ is $\C^3$-close to constant. 
	\item[(ii)] $C$ is smooth (at least of class $\C^3$) and satisfies the Runge approximation property. %\marginpar{\hl{how smooth ? TBD}}
    \end{itemize}
\end{theorem}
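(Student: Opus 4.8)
The plan is to reduce both cases to the following statement: produce, on the relevant set, a family of boundary conditions whose induced displacement fields satisfy Hypotheses \ref{hyp}.A--B, since Theorem \ref{thm:injstab} then delivers the claimed unique and stable reconstruction. Two structural facts make this feasible. First, by the Remark following Hypothesis \ref{hyp}, conditions \eqref{eq:hyp1}--\eqref{eq:hyp2} are \emph{open}: $\det_V$ and the sum in \eqref{eq:hyp2} are polynomials in the strains $\eps^{(j)}$ and their first derivatives, so they survive small perturbations of the $\bfu^{(j)}$ in the $\C^2$ topology. Second, by well-posedness of \eqref{eq:elasticity} together with interior and boundary Schauder estimates, the fields $\bfu^{(j)}$ depend continuously — in that same topology — on their boundary data and on the tensor $C$. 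Hence it suffices to exhibit the desired configuration for a convenient \emph{constant} reference tensor and then perturb.

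For case (i), I would let $C_0$ be the constant tensor that $C$ is $\C^3$-close to and work first with the frozen operator $\nabla\cdot(C_0:\eps(\cdot))$, for which explicit polynomial solutions are available. Six linear fields $\bfu^{(j)}(\x)=A^{(j)}\x$ have constant strains $\eps^{(j)}=\tfrac12\big(A^{(j)}+(A^{(j)})^T\big)$ and solve the equation trivially; choosing the $A^{(j)}$ so that $\eps^{(1)},\dots,\eps^{(6)}$ form a basis of $S_3(\Rm)$ makes $\det_V$ a nonzero constant, which is Hypothesis \ref{hyp}.A with a uniform $c_0$. For Hypothesis \ref{hyp}.B one needs non-constant strains, hence quadratic displacements: for such $\bfu$ the equation collapses to the linear system $C_{0,ijkl}\,\partial_i\eps_{kl}(\bfu)=0$ in the constant entries $\partial_i\eps_{kl}$, whose solution space is ample. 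Because the basis strains are constant and the quadratic strains have constant gradients, the $3N$ matrices of the family $M$ produced by the reconstruction procedure (the elimination of $\tdiv C$ that yields the orthogonality relations \eqref{eq:orthogonality}) turn out to be constant on $X$. I would then select $N$ quadratic fields (by the Remark, $N\ge 7$ suffices) and verify, using that the Voigt matrix $\bc_0$ of $C_0$ is invertible (Lemma \ref{lem:detc}), that these constant matrices span a hyperplane of $S_6(\Rm)$, i.e. that the sum in \eqref{eq:hyp2} is a nonzero constant — a finite, explicit linear-algebra check. Taking as boundary data the traces on $\partial X$ of all these polynomials, for $C$ that is $\C^3$-close to $C_0$ the exact solutions of \eqref{eq:elasticity} with these traces are $\C^2$-close on $X$ (Schauder estimates for the equation solved by the difference) to the reference polynomials, so Hypotheses \ref{hyp}.A--B persist throughout $X$ and Theorem \ref{thm:injstab} applies.

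For case (ii), I would localize: fix $\x_0\in X$ and set $C_0:=C(\x_0)$. On a small ball $B=B(\x_0,r)\subset X$, build as in (i) polynomial solutions of $\nabla\cdot(C_0:\eps(\cdot))=0$ satisfying \ref{hyp}.A--B on $B$ with uniform constants. Since $C\in\C^3$ and $r$ is small, $\|C-C_0\|_{\C^3(B)}$ is small, so the perturbation argument of (i) — now run on $B$ with the polynomial traces imposed on $\partial B$ — yields exact solutions of $\nabla\cdot(C:\eps(\cdot))=0$ on $B$ that still satisfy \ref{hyp}.A--B on a slightly smaller ball $B'\Subset B$. These are only \emph{local} solutions; to globalize them I would invoke the Runge approximation property, upgrading the $L^2$ approximation of Section \ref{sec:runge} (Eq. \eqref{eq:L2est}) to $\C^2(B')$ by interior elliptic regularity, so that the approximating global solutions on $X$ inherit the open conditions \eqref{eq:hyp1}--\eqref{eq:hyp2} near $\x_0$; their boundary traces constitute the desired nonempty open set, and Theorem \ref{thm:injstab} gives reconstructibility in a neighborhood of $\x_0$.

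The hard part will be the explicit verification, for the constant tensor, that Hypothesis \ref{hyp}.B is genuinely \emph{achievable}: one must describe the quadratic solution space of $\nabla\cdot(C_0:\eps(\cdot))$ precisely enough, write down the $3N$ matrices $\N(M')$ of \eqref{eq:crossprod} that the procedure attaches to the chosen solutions, and show their span is \emph{exactly} a hyperplane of $S_6(\Rm)$ — codimension one, orthogonal to $C_0$ — rather than some smaller or all of $S_6(\Rm)$. Tracking the elimination of $\tdiv C$ through this computation, and arranging the constants to be uniform in $\x_0$ so that (ii) holds at \emph{every} point, is where the real work lies. A secondary subtlety is that the Runge approximation must be carried out in $\C^2$ rather than $L^2$, which is precisely what forces the interior-regularity bootstrap.
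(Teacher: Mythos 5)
Your reduction is the same as the paper's: for (i), build linear and quadratic polynomial solutions of the frozen constant-coefficient system, check Hypotheses \ref{hyp}.A--B there, and propagate them to nearby tensors via elliptic regularity and the openness of \eqref{eq:hyp1}--\eqref{eq:hyp2}; for (ii), freeze $C_0=C(\x_0)$, pass from polynomial solutions to local solutions of the true equation on a small ball, and then upgrade the Runge $L^2$-approximation to $\C^1$ control of the strains by interior regularity. That matches the paper's Steps 1--3 and its completion argument essentially verbatim (the paper uses Sobolev estimates plus embedding where you invoke Schauder, which is immaterial).

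The genuine gap is the part you explicitly defer: showing that quadratic solutions of the constant-coefficient system actually \emph{exist in sufficient supply} to make the matrices of \eqref{eq:orthogonality} span a hyperplane of $S_6(\Rm)$, i.e.\ that \eqref{eq:hyp2} is \emph{achievable} and not vacuous. This is not a routine finite check one can wave at: one must parametrize the $15$-dimensional space of admissible triples $(V_1,V_2,V_3)$ cut out by \eqref{eq:conditions} and exhibit choices whose associated matrices \eqref{eq:Mmatrices} generate $\{\bc\}^\bot$. The paper does this by expressing admissible $V_i$'s in terms of the rows $\bc_i^\star$ of $\bc^{-1}$ (invertibility being Lemma \ref{lem:detc}) and running an explicit exhaustion over the cases where two, respectively one, of the $V_i$ vanish, which produces the spanning family \eqref{eq:hyperplane} of $\{\bc\}^\bot$ in $M_6(\Rm)$ (dimension $35$, more than the needed $20$). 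Without some such construction the proof of both (i) and (ii) is incomplete, since everything downstream perturbs off this base case. A small notational slip: the $3N$ matrices in the family $M$ are the $M^{(p),i}$ of \eqref{eq:orthogonality}, not the normals $\N(M')$ of \eqref{eq:crossprod}, which are formed from $20$-tuples of them; your description otherwise identifies the right objects.
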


An application of particular interest is the reconstruction of transversely isotropic (TI) elasticity tensors. Such tensors have, as of yet, the highest type of anisotropy for which the Runge approximation has been proved (see \cite{Nakamura2005}), so that they are covered in case (ii) of Theorem \ref{thm:recons}. We also explain in Section \ref{ssec:constant} how to generate explicit boundary conditions reconstructing a constant TI tensor which, by case (i) of Theorem \ref{thm:recons}, can be used to reconstruct a near-constant TI tensor.

\paragraph{Outline.} The rest of the paper is organized as follows. Section \ref{sec:prooflemma} covers the proof of Lemma \ref{lem:detc}. Section \ref{sec:algo} covers the reconstruction procedure as well as its stability property (proof of Theorem \ref{thm:injstab}). Section \ref{sec:recons} covers the two ways described in Theorem \ref{thm:recons} to fulfill Hypotheses \ref{hyp}.A-B for certain classes of tensors, thereby establishing their unique and stable (in the sense of Theorem \ref{thm:injstab}) reconstructibility from displacement fields.

\section{Proof of Lemma \ref{lem:detc}} \label{sec:prooflemma}

Suppose the elasticity tensor $C$ satisfies \eqref{eq:UPS}. In the study of elastic eigentensors in \cite{Mehrabadi1990}, it is established that $C$ can have at most six distinct eigencouples $(\eps,\lambda)$ such that the relation $C:\eps = \lambda\eps$ is satisfied. The symmetries of $C$ and hypothesis \eqref{eq:UPS} imply that the $\lambda$'s are real and that all of them satisfy $\lambda\ge \kappa$. It is also mentioned in \cite{Mehrabadi1990} that the eigenvalues $\lambda$ are precisely the eigenvalues of the $S_6(\Rm)$-valued representation of $C$ in the form 
\begin{align*}
    \bc' = \{ 2^{\frac{\chi(i) + \chi(j)}{2}} c_{ij} \}_{1\le i,j \le 6}, \quad \text{where} \quad \chi(i) = \left\{
    \begin{array}{cc}
	0 & \text{if }\; i = 1, 2, 3, \\
	1 & \text{if }\; i = 4, 5, 6,
    \end{array}
    \right.
\end{align*}
and where $c_{ij}$ denotes the Voigt representation of $C$ presented in the introduction. It is then immediate that $\det \bc' = 8 \det \bc$. Moreover, as mentioned above, all eigenvalues of $\bc'$ match the eigenvalues of $C$ and therefore satisfy the estimate $\lambda\ge \kappa$. Therefore
\[ \det\bc = \frac{1}{8} \det \bc' \ge \frac{\kappa^6}{8}, \]
hence \eqref{eq:detc} holds with $\kappa' = \frac{\kappa^6}{8}$.

\section{Reconstruction algorithm and its stability} \label{sec:algo}

As seen in Theorem \ref{thm:injstab}, the left hand side of \eqref{eq:stab} contains two terms whose product forms the elasticity tensor $C$, reconstructed with stability estimates in different norms: we decompose $C$ into the product $\tau\wtC$, where $\wtC$ contains the rescaled anisotropic structure of $C$, defined by a normalizing condition (specifically, if $\tilde\bc$ describes $\wtC$ in Voigt notation, then $\det \tilde\bc = 1$) and $\tau$ is the remaining scalar factor. That this is possible comes from estimate \eqref{eq:detc}, which states that $\det\bc$ is uniformly bounded away from zero throughout $X$. The next two sections focus on the successive reconstruction of $\wtC$, then $\tau$ upon assuming that Hypotheses \ref{hyp}.A-B holds.

\subsection{Reconstruction of the anisotropy}

In this section, we will use the Voigt notation, %i.e. define the isomorphism $S_6(\Rm)\ni A \mapsto A_\v \in \Rm^6$ such that for $A\in S_6(\Rm)$ with $A_{ij}=A_{ji}$, then $A_{\v} = \v(A)$ is the vector with components $(V_{11}, V_{22}, V_{33}, V_{23}, V_{13}, V_{12})$.  
so that strain tensors will be represented as $\Rm^6$-valued funtions $\eps \equiv \eps_V$.
%$= \{\epsilon_j\}_{1\le j\le 6}$, corresponding to a strain tensor of the form 
%\begin{align*}
%    \eps = \left(
%    \begin{array}{ccc}
%	\eps_1 & \frac{1}{2} \eps_6 & \frac{1}{2} \eps_5\\
%	\cdot & \eps_2 & \frac{1}{2} \eps_4 \\
%	\text{sym} & \cdot & \eps_3 
%    \end{array}
%    \right),
%\end{align*}
%and similarly for the stress tensor $\sigma = \{\sigma_j\}_{1\le j\le 6}$. 
In this notation, the elasticity tensor becomes an $S_6(\Rm)$-valued function, characterized by 21 scalar functions $\bc = \{\bc_{\alpha\beta}\}_{1\le \alpha,\beta\le 6}$ satisfying $\bc_{\alpha\beta} = \bc_{\beta\alpha}$ and where Hooke's law is expressed as a regular matrix-vector product $\sigma_V = \bc\ \eps_V$, with $(\sigma_V, \eps_V)$ as in \eqref{eq:Voigt}. This makes the problem tractable via algebraic manipulations on matrices instead of 4-tensors.

Using the Voigt notation, the elasticity system \eqref{eq:elasticity} takes the form 
\begin{align}
  \d\cdot (\bc\ \eps) = 0, \qquad \d := \left(
  \begin{array}{cccccc}
    \partial_1 & 0 & 0 & 0 & \partial_3 & \partial_2 \\
    0 & \partial_2 & 0 & \partial_3 & 0 & \partial_1 \\
    0 & 0 & \partial_3 & \partial_2 & \partial_1 & 0 
  \end{array}
  \right),
  \label{eq:elasticity2}
\end{align}
and where, for a $M_6(\Rm)$-valued function $A$ and a scalar function $f$, we have the identity 
\begin{align}
  \d \cdot (f A) = (\d f)\cdot A + f \d \cdot A.
  \label{eq:identity}
\end{align}

As the reconstruction approach is local (even pointwise for the anisotropic part), we assume to have 6 elasticity solutions $\{\bu^{(j)}\}_{1\le j\le 6}$ whose corresponding strain tensors $\{\eps_V^{(j)}(\x)\}_{1\le j\le 6}$ form a basis of $\Rm^6$ for every $\x$ of some subdomain $\Omega\subset X$. Any additional solution $\bu^{(p)}$ with $p\ge 7$ is such that for $\x\in\Omega$, $\eps^{(p)}(\x)$ decomposes uniquely into the basis above as 
\begin{align*}
    \eps^{(p)}(\x) = \sum_{j=1}^6 \mu_{pj}(\x) \eps^{(j)}(\x), \where\quad \mu_{pj} := \frac{\det_V (\eps^{(1)}(\x),\dots,\overbrace{\eps^{(p)}(\x)}^j, \dots, \eps^{(6)}(\x))}{\det_V (\eps^{(1)}(\x), \dots, \eps^{(6)}(\x))}, \quad 1\le j\le 6.
\end{align*}
Plugging this equality into the elasticity equation, we obtain that 
\begin{align*}
  0 = \d \cdot (\bc \eps^{(p)}) = \sum_{j=1}^6 \d\cdot (\mu_{pj} \bc\eps^{(j)}(\x_0)) = \sum_{j=1}^6 (\d \mu_{pj}) \cdot \bc \eps^{(j)} + \mu_{pj} \d\cdot (\bc \eps^{(j)}),
\end{align*}
which, since $\d\cdot (\bc \eps^{(j)}) = 0$, implies that 
\begin{align*}
  \sum_{j=1}^6 (\d \mu_{pj}) \cdot \bc \eps^{(j)} = 0.
\end{align*}
This last equation can be seen as three scalar orthogonality constraints on the tensor $\bc$ in the inner product structure of $S_6(\Rm)$ which we denote by $A:B := \tr (AB^T) = \sum_{i,j=1}^6 A_{ij} B_{ij}$, where the matrices that are othogonal to $C$ are directly known from available measurements. The last equation is equivalent to 
\begin{align}
  \begin{split}
    \bc: M^{(p),1} = \bc: M^{(p),2} &= \bc:M^{(p),3} = 0, \where \\
    M^{(p),1} &= \sum_{j=1}^6 (\partial_1\mu_{pj}\ 0\ 0\ 0\ \partial_3 \mu_{pj}\ \partial_2 \mu_{pj}) \otimes \eps^{(j)}, \\
    M^{(p),2} &= \sum_{j=1}^6 (0\ \partial_2\mu_{pj}\ 0\ \partial_3 \mu_{pj}\ 0\ \partial_1 \mu_{pj}) \otimes \eps^{(j)}, \\
    M^{(p),3} &= \sum_{j=1}^6 (0\ 0\ \partial_3\mu_{pj}\ \partial_2 \mu_{pj}\ \partial_1 \mu_{pj}\ 0) \otimes \eps^{(j)}.
  \end{split}
  \label{eq:orthogonality}
\end{align}
Note that since $\bc$ is orthogonal to $A_6(\Rm)$, one could replace the matrices $M^{(p),i}$ with their symmetrized versions. Notice that the components of these matrices are first partial derivatives of rational functions of strain tensors.  

If enough linear constraints of the form \eqref{eq:orthogonality} are available from a rich enough set of measurements, that is to say, if enough such matrices are available and form a hyperplane of $S_6(\Rm)$ at $\x_0$, then the tensor $\bc(\x_0)$, constrained to be perpendicular to this hyperplane, will be determined up to a multiplicative constant. The reconstruction can be done via a generalization of the cross-product, as used for instance in \cite{Monard2012a} for similar purposes in the context of the conductivity equation. Define $\{\bfm_j\}_{j=1}^{21}$ a basis of $S_6(\Rm)$ and given $M = \{M_j\}_{j=1}^{20}\subset S_6(\Rm)$, define the operator $\N: S_6(\Rm)^{20}\to S_6(\Rm)$ by expanding the formal determinant below with respect to its last row  
\begin{align}
    \N (M) := \frac{1}{\det (\bfm_1,\cdots,\bfm_{21})} \left|
    \begin{array}{ccc}
	\dprod{M_1}{\bfm_1} & \cdots & \dprod{M_1}{\bfm_{21}} \\
	\vdots & \ddots & \vdots \\
	\dprod{M_{20}}{\bfm_1} & \cdots & \dprod{M_{20}}{\bfm_{21}} \\
	\bfm_1 & \cdots & \bfm_{21}
    \end{array}
    \right|.
    \label{eq:crossprod}
\end{align} 
$\N$ is a 20-linear, alternating map that does not depend on the choice of basis for $S_6(\Rm)$. $\N (M)$ is a vector that is normal to the hyperplane spanned by $M$ when $M$ is linearly independent, zero otherwise. In particular, if $M$ is a family of matrices known to be orthogonal to a given matrix $\bfm'$, then $\N(M)$ is either zero (if $\dim\text{span }M<20$) or proportional to $\bfm'$ (if $\dim\text{span }M = 20$). 
\smallskip

In light of this last comment, and assuming that a rich enough set of solutions of the elasticity system \eqref{eq:elasticity} gives rise to a family of matrices $M$ of the form \eqref{eq:orthogonality}, with cardinality greater than 20 and spanning a hyperplane of $S_6(\Rm)$ at a given point $\x_0 \in X$, then for any given $20$-uple $M'\subset M$, $\N(M')$ is either zero or proportional to $\bc(\x_0)$. Normalizing $\tilde\bc(\x_0) = \det (\bc (\x_0))^{\frac{-1}{6}} \bc(\x_0)$ and enforcing the condition $\tilde\bc_{11}>0$ (this is because in the proof of Lemma \ref{lem:detc}, we notice that the top-left $3\times 3$ block of $\bc$ matches that of $\bc'$ which is a symmetric positive definite matrix, so one must have $\bc_{11}>0$, $\bc_{22}>0$ and $\bc_{33}>0$), then we have the equality
\begin{align*}
    (\pm)_{M'} \N(M') = (\det (\N(M')))^{\frac{1}{6}} \tilde\bc(\x_0),
\end{align*}
for every $20$-uple $M'\subset M$, where $(\pm)_{M'}$ is the sign of the top-left entry of $\N(M')$. This equation is either trivial when $M'$ is linearly dependent, or reconstructs $\tilde\bc(\x_0)$ otherwise. Condition \ref{eq:hyp2} ensures that at least one subfamily $M'$ is linearly independent, so we sum the last equation over all subfamilies and arrive at the formula 
\begin{align}
    \tilde\bc (\x_0) = \left(\sum_{M'\subset M,\ \# M' = 20} (\det (\N(M')))^{\frac{1}{6}}\right)^{-1} \sum_{M'\subset M,\ \# M' = 20} (\pm)_{M'} \N(M').
    \label{eq:Crc}
\end{align}

\begin{remark} The expliciteness of formula \eqref{eq:Crc} is interesting in its own right and makes the stability of the problem straighforward to assess. On the other hand, the computation of $20\times 20$ determinants can be expensive, and methods constructing a normal to a hyperplane without implementing \eqref{eq:Crc} might reveal more practical. For a $20$-uple $M$, an example of a potentially faster method for finding a scalar multiple of $\N(M)$ is to form a $20\times 21$ matrix whose rows are the elements of $M$, and to find a vector in the nullspace of that matrix via Gaussian elimination. 
\end{remark}

Now that the anisotropic structure $\wtC$ (or equivalently, $\tilde\bc$) is reconstructed, we now explain how the multiplicative scalar $\tau$ can be reconstructed via a standard transport equation. 

\subsection{Reconstruction of the scalar factor $\tau$} \label{sec:rectau}

We now switch back to 4-tensor notation. Plugging the decomposition $C = \tau\wtC$ into the elasticity equation, where $\wtC$ is assumed to be known from the previous step, we obtain, for each elasticity solution considered,
\begin{align}
    (\wtC:\eps)\nabla\log\tau = - \tdiv (\wtC:\eps),     
    \label{eq:gradtau1}
\end{align}
where $(\wtC:\eps) \in S_3(\Rm)$ is not necessarily invertible. However, under the assumption that $\{\eps^{(j)}\}_{j=1}^6$ is a basis of $S_3(\Rm)$, by virtue of \eqref{eq:detc}, so is $\{\wtC:\eps^{(j)}\}_{j=1}^6$. Therefore, the identity tensor $\Imm_3$ decomposes into this basis by means of some functions $\mu_j(\x)$ such that
\begin{align*}
    \Imm_3 = \sum_{j=1}^6 \mu_j(\x)\ \wtC:\eps^{(j)}.
\end{align*}
If we denote $D_{ij} := (\wtC:\eps^{(i)}):(\wtC:\eps^{(j)})$ for $1\le i,j\le 6$, the $S_6(\Rm)$-valued function $D = \{D_{ij}\}$ is known and invertible (as the Grammian matrix of the basis $\{\wtC:\eps^{(j)}\}_{j=1}^6$), and the entries of its inverse are denoted by $D^{ij}$. In this case, the functions $\mu_j$ take the explicit form 
\begin{align*}
    \mu_j(\x) = \sum_{i=1}^6 D^{ij}\ \Imm_3 : (\wtC:\eps^{(i)}) = \sum_{i=1}^6 D^{ij}\ \tr (\wtC:\eps^{(i)}).  
\end{align*}
Now taking a linear combination of \eqref{eq:gradtau1} weighted by the functions $\mu_j$, we obtain
\begin{align}
    \nabla\log\tau = -\sum_{j=1}^6 \mu_j(\x)\ \tdiv (\wtC:\eps^{(j)}).
    \label{eq:gradtau2}
\end{align}
The right hand side of this equation is completely known and $\tau$ can therefore be reconstructed via either $(i)$ integration of \eqref{eq:gradtau2}, or, after taking divergence, $(ii)$ solving a Poisson equation on the domain $\Omega$ with known Neumann boundary conditions. As discussed in \cite{Bal2011a}, the approach $(ii)$ may be more robust to noise than the former, as the integration of ordinary differential equations in the presence of noisy measurements may strongly depend on the choice of integration path. Moreover, taking divergence of \eqref{eq:gradtau2} has the advantage of naturally removing the curl part of noisy right-hand sides in \eqref{eq:gradtau2}.

\subsection{Reconstruction of the tensor $\tdiv C$}

Now that both $\wtC$ and $\tau$ are reconstructed, we explain how to gain stability on the reconstruction of the third-order tensor $\tdiv C = \partial_i C_{ijkl}\ \be_j\otimes \be_k\otimes \be_l$. Note that this tensor is symmetric in the pair of indices $(k,l)$. Now the elasticity equation can be rewritten as
\begin{align*}
    (\partial_i C_{ijkl} ) \eps_{kl} + C_{ijkl} \partial_i \eps_{kl} = 0, \qquad 1\le j\le 3,
\end{align*}
or in contracted notation,
\begin{align}
    (\tdiv C)_{j\cdot\cdot}:\eps = - C_{ijkl} \partial_i \eps_{kl}, \qquad 1\le j\le 3.
    \label{eq:divC}
\end{align}
For every $1\le j\le 3$, $(\tdiv C)_{j\cdot\cdot}$ can be seen as an $S_3(\Rm)$-valued function, and as such, using again the assumption that $\{\eps^{(j)}\}_{j=1}^6$ is a basis of $S_3(\Rm)$, upon defining $E_{ij} = \eps^{(i)}:\eps^{(j)}$ for $1\le i,j\le 6$, the $S_6(\Rm)$-valued function $E = \{E_{ij}\}$ is uniformly invertible over $\Omega$ and we denote by $E^{ij}$ the components of its inverse. With such notation, Cramer's rule in $S_3(\Rm)$ then reads
\begin{align*}
    (\tdiv C)_{j\cdot\cdot} = E^{pq} ((\tdiv C)_{j\cdot\cdot}:\eps^{(p)}) \eps^{(q)}, \qquad 1\le j\le 3,
\end{align*}
and combining this with \eqref{eq:divC}, we obtain a reconstruction formula for the tensor $\tdiv C$
\begin{align}
    (\tdiv C)_{j\cdot\cdot} = - E^{pq} (C_{ijkl}\ \partial_i \eps^{(p)}_{kl})\ \eps^{(q)}, \qquad 1\le j\le 3.
    \label{eq:divCrc}
\end{align}

\subsection{Proof of Theorem \ref{thm:injstab}} \label{sec:proofstab}

\paragraph{Uniqueness.} Equations \eqref{eq:Crc}, \eqref{eq:gradtau2} and \eqref{eq:divCrc} give us explicit algorithms to reconstruct $\tilde\bc$, $\tau$ and $\tdiv C$ explicitely, with the only indeterminacy that $\tau$ is defined up to a multiplicative constant. This is because multiplying the elasticity system by a constant changes neither the equation, nor its solutions. We can remove this indeterminacy by assuming that $\tau$ is known at a (boundary) point or by fixing its value at a point. The constructive nature of the approach gives uniqueness. 

\paragraph{Stability on $\wtC$.} The functional reconstructing $\tilde\bc$ is a rational function of strain tensors and their first partial derivatives. If two sets of displacement fields $\{\bu^{(j)}\}_{j=1}^{6+N}$ and $\{\bu^{'(j)}\}_{j=1}^{6+N}$ both satisfy hypotheses \ref{hyp}.A-B over some $\Omega\subset X$, the denominator of the right hand side of \eqref{eq:Crc} never vanishes, and its rational expression in terms of measurement components yields the local estimate
\begin{align}
    \|\wtC -\wtC'\|_{L^\infty (\Omega)} \le K \sum_{j=1}^{6+N} \|\eps^{(j)}-\eps^{'(j)}\|_{W^{1,\infty}(\Omega)},
    \label{eq:stab1}
\end{align}
for some constant $K>0$. More generally, if the strain tensors are in $W^{p+1,\infty}(\Omega)$ for some integer $p \ge 0$, the reconstruction formula \eqref{eq:Crc} can be differentiated $p$ times to yield stability estimates in smoother norms of the form
\begin{align}
    \|\wtC -\wtC'\|_{W^{p,\infty} (\Omega)} \le K \sum_{j=1}^{6+N} \|\eps^{(j)}-\eps^{'(j)}\|_{W^{p+1,\infty}(\Omega)}, \qquad p\ge 0,
    \label{eq:stab2}
\end{align}
where the constant $K$ grows polynomially in terms of the maximum $W^{p+1,\infty}(\Omega)$ norm of the strain tensors and the inverse of $\min (c_0,c_1,c'_0,c'_1)>0$, where $(c_0,c_1,c'_0,c'_1)$ are the constants defined in \eqref{eq:hyp1}-\eqref{eq:hyp2} corresponding to each system of measurements $\{\bu^{(j)}\}_{j=1}^{6+N}$ and $\{\bu^{'(j)}\}_{j=1}^{6+N}$.

\paragraph{Stability on the scalar factor $\tau$.} Equation \eqref{eq:gradtau2} takes the form $\nabla\log\tau = F (\wtC, \bu)$ with $F$ a rational function of the components of $\wtC$ and its first derivatives, and of strain tensors and their first partial derivatives, one deduces
\begin{align}
    \|\tau-\tau'\|_{W^{p+1,\infty} (\Omega)} \le K' \sum_{j=1}^{6+N} \|\eps^{(j)}-\eps^{'(j)}\|_{W^{p+1,\infty}(\Omega)} + K'' \|\wtC - \wtC'\|_{W^{p+1,\infty}(\Omega)}, \qquad p\ge 0.
    \label{eq:stab3}
\end{align}
We see that when $\wtC$ is known {\it a priori}, only one derivative is lost from the measurements to the quantity $\tau$. However, when considering joint reconstructions of $(\wtC,\tau)$, we see that errors $\|\wtC - \wtC'\|_{W^{p+1,\infty}(\Omega)}$ are controlled by measurement errors in $\|\bu^{(j)}-\bu^{'(j)}\|_{W^{p+3,\infty}(\Omega)}$ norm, so that the second term in the right hand side of \eqref{eq:stab3} implies a loss of two derivatives from the measurements to the quantity $\tau$. This explains Remark \ref{rem:stabtau}. 

\paragraph{Stability on $\tdiv C$.} The right-hand-side of \eqref{eq:divCrc} is a linear functional in the components of $C$, rational in the components of the measurements and its partial derivatives up to second order so that, as before, the stability on $\tdiv C$ is of the form
\begin{align}
    \|\tdiv C - \tdiv \wtC\|_{W^{p,\infty}} \le K' \sum_{j=1}^{6+N} \|\eps^{(j)}-\eps^{'(j)}\|_{W^{p+1,\infty}(\Omega)} + K'' \|C-C'\|_{W^{p,\infty}(\Omega)}, \qquad p\ge 0.
    \label{eq:stab4}
\end{align}
This ends the discussion on stability and the proof of Theorem \ref{thm:injstab}.

\section{Reconstructible tensors - Fulfilling Hypotheses \ref{hyp}.A-B} \label{sec:recons}

\subsection{The constant coefficient problem} \label{ssec:constant}

We now show that, in the same way that harmonic polynomials up to second order uniquely characterize a constant diffusion tensor in the scalar case, a constant elasticity tensor $C$ is uniquely characterized by elasticity solutions whose components are polynomials up to second order. 

\paragraph{Polynomial displacement fields.} Assume that $C$ is constant and let us denote by $\bc_i$ the $i$-th row (or column) of $\bc$ (i.e., $C$ in Voigt notation). 

It is straightforward to see that any displacement field $\bu$ with linear components is a solution to \eqref{eq:elasticity}. These solutions in fact allow us to construct a basis of strain tensors, i.e. such that $\eps^{(1)},\dots,\eps^{(6)}$ is the natural basis of $\Rm^6$ for every $\x\in X$. This can be achieved by considering the following solutions
\begin{align}
    \begin{split}
	\bu^{(1)}(\x) &= (x, 0, 0)^T, \quad \bu^{(2)}(\x) = (0,y,0)^T, \quad \bu^{(3)} (\x) = (0,0,z)^T, \\
	\bu^{(4)}(\x) &= \frac{1}{2}(0,z,y)^T, \quad \bu^{(5)}(\x) = \frac{1}{2}(z,0,x)^T, \quad \bu^{(6)} (\x) = \frac{1}{2}(y,x,0)^T.
    \end{split}
    \label{eq:linear}
\end{align}

Second, a displacement field with quadratic components, of the form 
\begin{align}
  \bu (\x) = \left( \frac{1}{2} \x\cdot P\x,\ \frac{1}{2} \x\cdot Q\x,\ \frac{1}{2} \x\cdot R\x  \right), \qquad P,Q,R \in S_3(\Rm),
  \label{eq:quadratic}
\end{align}
has a strain tensor, in Voigt notation, taking the form $\eps(\x) = x V_1 + y V_2 + z V_3$, where we have defined 
\begin{align*}
  V_1 &:= (P_{11},\ Q_{21},\ R_{31},\ Q_{31} + R_{21},\ P_{13} + R_{11},\ P_{12} + Q_{11})^T, \\
  V_2 &:= (P_{12},\ Q_{22},\ R_{32},\ Q_{32} + R_{22},\ P_{23} + R_{21},\ P_{22} + Q_{21})^T, \\
  V_3 &:= (P_{13},\ Q_{23},\ R_{33},\ Q_{33} + R_{23},\ P_{33} + R_{31},\ P_{32} + Q_{13})^T. 
\end{align*}
The correspondence $(P,Q,R)\mapsto (V_1,V_2,V_3)$ is bijective, with inverse ($V_{ij}$ denotes the $j$-th component of $V_i$)
\begin{align}
    \begin{split}
	P &= \left[
	    \begin{array}{ccc}
		V_{11} & V_{21} & V_{31} \\
		\cdot & V_{26}-V_{12} & \frac{1}{2}(V_{25}+V_{36}-V_{14}) \\
		\text{sym} & \cdot & V_{35}-V_{13}
	    \end{array}
	\right], \quad Q = \left[
	    \begin{array}{ccc}
		V_{16}-V_{21} & V_{12} & \frac{1}{2} (V_{36} + V_{14} -V_{25}) \\
		\cdot & V_{22} & V_{32} \\
		\text{sym} & \cdot & V_{34} - V_{23}
	    \end{array}
	\right], \\
	R &= \left[
	\begin{array}{ccc}
	    V_{15}-V_{31} &  \frac{1}{2} (V_{14} + V_{25} - V_{36}) & V_{13} \\
	    \cdot & V_{24}-V_{32} & V_{23} \\
	    \text{sym} & \cdot & V_{33}
	\end{array}
    \right],
    \end{split}
    \label{eq:Pinv}
\end{align}
%\begin{center}
%\begin{tabular}{>{$}l<{$} || >{$}l<{$} |>{$}l<{$} |>{$}l<{$} |>{$}l<{$} |>{$}l<{$} |>{$}l<{$}}
%\text{index} & 11 & 22 & 33 & 23, 32 & 31, 13 & 12, 21 \\
%\hline\hline
%P & V_{11} & V_{26}-V_{12} & V_{35}-V_{13} & \frac{1}{2}(V_{25}+V_{36}-V_{14}) & V_{31} & V_{21}  \\
%Q & V_{16}-V_{21} & V_{22} & V_{34}-V_{23} & V_{32} & \frac{1}{2} (V_{36} + V_{14} -V_{25}) & V_{12} \\
%R & V_{15}-V_{31} & V_{24}-V_{32} & V_{33} & V_{23} & V_{13} & \frac{1}{2} (V_{14} + V_{25} - V_{36})
%\end{tabular}
%\end{center}
so that prescribing one or the other is equivalent. Then the stress tensor is expressed as
\begin{align*}
  \sigma = \bc\ \eps = x\ \bc\ V_1 + y\ \bc\ V_2 + z\ \bc\ V_3.
\end{align*}
With this expression, $\bu$ is an elasticity solution if and only if the three scalar conditions below are fulfilled
\begin{align}
  \begin{split}
    \bc_1\cdot V_1 + \bc_6\cdot V_2 + \bc_5\cdot V_3 &= 0, \\
    \bc_6\cdot V_1 + \bc_2\cdot V_2 + \bc_4\cdot V_3 &= 0, \\
    \bc_5\cdot V_1 + \bc_4\cdot V_2 + \bc_3\cdot V_3 &= 0.    
  \end{split}
  \label{eq:conditions}  
\end{align}
This essentially leaves us with a 18-3 = 15-parameter family of quadratic solutions to the constant-coefficient elasticity system, and we now show that these solutions suffice to characterize $C$ uniquely.
Conditions \eqref{eq:conditions} can be written in the form \eqref{eq:orthogonality} where, defining $\{\bfe_i\}_{i=1}^6$ the natural basis of $\Rm^6$, the matrices $M^{1}, M^{2}, M^{3}$ take the form
\begin{align}
  \begin{split}
     M^1 &= V_1\otimes \bfe_1 + V_2\otimes \bfe_6 + V_3\otimes \bfe_5, \\
     M^2 &= V_1\otimes \bfe_6 + V_2\otimes \bfe_2 + V_3\otimes \bfe_4, \\
     M^3 &= V_1\otimes \bfe_5 + V_2\otimes \bfe_4 + V_3 \otimes \bfe_3.
  \end{split}
  \label{eq:Mmatrices} 
\end{align}
We will show in the next paragraph that, for various choices of $V_1,V_2,V_3$ satisfying \eqref{eq:conditions}, the corresponding matrices \eqref{eq:Mmatrices} span the hyperplane $\{\bc\}^\bot$ in $M_6(\Rm)$, thereby imposing enough orthogonality conditions on $\bc$ to determine it uniquely up to a multiplicative constant.

\paragraph{Rank maximality of quadratic displacement fields.} Since $\bc$ is invertible, let us denote $\bc_i^\star$ the $i$-th row (or column) of $\bc^{-1}$, so that $\bc_i\cdot \bc_j^\star = \delta_{ij}$. Since we can write $\bc$ as $\bc = \sum_{j=1}^6 \bc_j \otimes \bfe_j$, using the identity
\begin{align*}
  (U\otimes V):(S\otimes T) = (U\cdot S) (V\cdot T), \quad U,V,S,T\in \Rm^6,
\end{align*}
we can show that the space $\{\bc\}^\bot$, regarded as a hyperplane of $M_6(\Rm)$ of dimension 35, is spanned by the following family:
\begin{align}
  \{\bc\}^\bot = \text{span}\left\{ \bc_i^\star\otimes \bfe_j, \quad 1\le i,j\le 6, \quad i\ne j, \quad \bc_i^\star \otimes \bfe_i - \bc_{i+1}^\star\otimes \bfe_{i+1}, \quad 1\le i\le 5  \right\}.
  \label{eq:hyperplane}
\end{align}
%Indeed, another way of writing $\bc$ is $\bc = \sum_{j=1}^6 \bc_j \otimes \bfe_j$ and, using the identity
%\begin{align*}
%  (U\otimes V):(S\otimes T) = (U\cdot S) (V\cdot T), \quad U,V,S,T\in \Rm^6,
%\end{align*}
%the following facts are straighforward to prove: the family $\{\bc_i^\star \otimes \bfe_j\}_{1\le i,j\le 6}$ is a basis of $M_6(\Rm)$; all 30 elements with $i\ne j$ are orthogonal to $\bc$; 5 more can be made out of the 6 elements with $i=j$, whose expressions are given in \eqref{eq:hyperplane}.

Next, proceeding by exhaustion, we construct quadratic solutions giving rise to matrices $M^{1}, M^2, M^3$ spanning the family \eqref{eq:hyperplane}. 

\begin{itemize}
    \item We first consider quadratic displacement fields satisfying \eqref{eq:conditions}, and such that two vectors among $\{V_1,V_2,V_3\}$ vanish identically. 
	\begin{itemize}
	    \item If $V_1=V_2=0$, conditions \eqref{eq:conditions} read $V_3\bot \{\bc_3,\bc_4,\bc_5\}$, so that $V_3\in \text{span} \left\{ \bc_1^\star, \bc_2^\star, \bc_6^\star \right\}$, thus leading to matrices $M^1, M^2, M^3$ in the set $\{\bc_i^\star \otimes \bfe_j,\ i=3,4,5,\ j=1,2,6\}$.  
	    \item If $V_1=V_3=0$, conditions \eqref{eq:conditions} read $V_2 \bot \{\bc_2,\bc_4,\bc_6\}$, so that $V_2\in \text{span} \left\{ \bc_1^\star, \bc_3^\star, \bc_5^\star \right\}$, thus leading to matrices $M^{1},M^{2},M^{3}$ in the set $\{\bc_i^\star \otimes \bfe_j,\ i=1,3,5,\ j=2,4,6\}$.
	    \item If $V_2=V_3=0$, conditions \eqref{eq:conditions} read $V_1 \bot \{\bc_1,\bc_5,\bc_6\}$, so that $V_1\in \text{span} \left\{ \bc_2^\star, \bc_3^\star, \bc_4^\star \right\}$, thus leading to matrices $M^{1},M^{2},M^{3}$ in the set $\{\bc_i^\star \otimes \bfe_j,\ i=2,3,4,\ j=1,5,6\}$.
	\end{itemize}
    \item Secondly, we consider quadratic displacement fields satisfying \eqref{eq:conditions} and such that one vector among $\{V_1,V_2,V_3\}$ vanishes identically. 
	\begin{itemize}
	    \item If $V_1 = 0$, conditions \eqref{eq:conditions} read $\bc_6\cdot V_2 + \bc_5\cdot V_3 = \bc_2\cdot V_2 + \bc_4\cdot V_3 = \bc_4\cdot V_2 + \bc_3\cdot V_3 = 0$. One way to achieve this is by writing, for some free parameters $\alpha,\beta,\gamma$, 
		\begin{align*}
		    V_2 = \alpha \bc_6^\star + \beta \bc_2^\star + \gamma \bc_4^\star, \qquad V_3 = -\alpha \bc_5^\star - \beta \bc_4^\star - \gamma \bc_3^\star. 
		\end{align*}
		The matrices $M^1,M^2,M^3$ thus constructed take the form
		\begin{align*}
		    M^1 &= \alpha (\bc^\star_6\otimes \bfe_6 - \bc^\star_5\otimes \bfe_5) + \beta (\bc_2^\star \otimes \bfe_6 - \bc_4^\star \otimes \bfe_5) + \gamma (\bc_4^\star \otimes \bfe_6 - \bc_3^\star\otimes \bfe_5), \\
		    M^2 &= \alpha (\bc_6^\star\otimes \bfe_2 - \bc_5^\star\otimes \bfe_4) + \beta(\bc_2^\star\otimes \bfe_2 - \bc_4^\star\otimes \bfe_4) + \gamma(\bc_4^\star \otimes \bfe_2 - \bc_3^\star\otimes \bfe_4), \\
		    M^3 &= \alpha(\bc_6^\star \otimes \bfe_4 - \bc_5^\star\otimes \bfe_3) + \beta(\bc_2^\star \otimes \bfe_4 - \bc_4^\star\otimes \bfe_3) + \gamma(\bc_4^\star \otimes \bfe_4 - \bc_3^\star \otimes \bfe_3).      
		\end{align*}
	    \item If $V_2 = 0$, conditions \eqref{eq:conditions} read $\bc_1\cdot V_1 + \bc_5\cdot V_3 = \bc_6\cdot V_1 + \bc_4\cdot V_3 = \bc_5\cdot V_1 + \bc_3\cdot V_3 = 0$. One way to achieve this is by writing, for some free parameters $\alpha,\beta,\gamma$, 
		\begin{align*}
		    V_1 = \alpha \bc_1^\star + \beta \bc_6^\star + \gamma \bc_5^\star, \qquad V_3 = -\alpha \bc_5^\star - \beta \bc_4^\star - \gamma \bc_3^\star.
		\end{align*}
		The matrices $M^1,M^2,M^3$ thus constructed take the form
		\begin{align*}
		    M^1 &= \alpha (\bc^\star_1\otimes \bfe_1 - \bc^\star_5\otimes \bfe_5) + \beta (\bc_6^\star \otimes \bfe_1 - \bc_4^\star \otimes \bfe_5) + \gamma (\bc_5^\star \otimes \bfe_1 - \bc_3^\star\otimes \bfe_5), \\
		    M^2 &= \alpha (\bc_1^\star\otimes \bfe_6 - \bc_5^\star\otimes \bfe_4) + \beta(\bc_6^\star\otimes \bfe_6 - \bc_4^\star\otimes \bfe_4) + \gamma(\bc_5^\star \otimes \bfe_6 - \bc_3^\star\otimes \bfe_4), \\
		    M^3 &= \alpha(\bc_1^\star \otimes \bfe_5 - \bc_5^\star\otimes \bfe_3) + \beta(\bc_6^\star \otimes \bfe_5 - \bc_4^\star\otimes \bfe_3) + \gamma(\bc_5^\star \otimes \bfe_5 - \bc_3^\star \otimes \bfe_3). 
		\end{align*}
	\end{itemize}
\end{itemize}

Examining the five cases considered, we see that the set \eqref{eq:hyperplane} can be spanned by matrices $M^1,M^2,M^3$ generated by either of the above cases. This concludes the discussion. 

\begin{remark} The number of solutions required here is not sharp. In fact, it would be enough to construct a hyperplane (of dimension 20 instead of 35) of $S_6(\Rm)$ using well-chosen solutions, though it is not necessarily straightforward to find out which subfamily of \eqref{eq:hyperplane} of cardinality 20 spans the orthogonal space to $\{\bc\}$ in $S_6(\Rm)$.   
\end{remark}

\paragraph{The transversely isotropic case.}
We treat here a particular example of anisotropy, where the number of unknowns is reduced to $5$, and we show how the general method can be adapted. Assuming that the $\be_3$ (or ``$z$'') direction is the constant direction of isotropy, we decompose a transversely isotropic tensor, in Voigt notation, as 
\begin{align*}
    \bc &= \left[
    \begin{array}{cccccc}
	a & b & c & 0 & 0 & 0 \\
	b & a & c & 0 & 0 & 0 \\
	c & c & d & 0 & 0 & 0 \\
	0 & 0 & 0 & e & 0 & 0 \\
	0 & 0 & 0 & 0 & e & 0 \\
	0 & 0 & 0 & 0 & 0 & \frac{a-b}{2}
    \end{array} 
\right] %= a\ \bt_1 + b\ \bt_2 + c\ \bt_3 + d\ \bt_4 + e\ \bt_5 , \quad \text{where} \\
%\bt_1 &:= \text{diag} (1,1,0,0,0,1/2), \quad \bt_4 = \text{diag} (0,0,1,0,0,0), \quad \bt_5 = \text{diag} (0,0,0,1,1,0), \\
%\bt_2 &:= \left[
%    \begin{array}{cccccc}
%	0 & 1 & 0 & 0 & 0 & 0 \\
%	1 & 0 & 0 & 0 & 0 & 0 \\
%	0 & 0 & 0 & 0 & 0 & 0 \\
%	0 & 0 & 0 & 0 & 0 & 0 \\
%	0 & 0 & 0 & 0 & 0 & 0 \\
%	0 & 0 & 0 & 0 & 0 & \frac{-1}{2}
%    \end{array} 
%\right], \qquad \bt_3 = \left[
%    \begin{array}{cccccc}
%	0 & 0 & 1 & 0 & 0 & 0 \\
%	0 & 0 & 1 & 0 & 0 & 0 \\
%	1 & 1 & 0 & 0 & 0 & 0 \\
%	0 & 0 & 0 & 0 & 0 & 0 \\
%	0 & 0 & 0 & 0 & 0 & 0 \\
%	0 & 0 & 0 & 0 & 0 & 0 
%    \end{array} 
%\right].
\end{align*}

Locally, four well-chosen orthogonality constraints are enough to locate $\bc(\x)$ in the five-dimensional space describing it. As each displacement field gives rise to 3 orthogonality constrains, we expect that two well-chosen displacement fields $\bu^{(7)}, \bu^{(8)}$ in addition to the six ones forming a basis of strain tensors should suffice.

As in the fully anisotropic case, the family of linear displacement fields \eqref{eq:linear} forms a basis of strain tensors of elasticity solutions, and we now aim at finding two additional solutions in the form of well-chosen quadratic polynomial displacement fields. Since the unknowns are now the scalars $(a,b,c,d,e)$, we can rewrite the conditions \eqref{eq:conditions} for a quadratic displacement field to be an elasticity solution as orthogonality constraints on the vector $(a,b,c,d,e)$. In this set of variables, a displacement field of the form \eqref{eq:quadratic} solves the system of elasticity if and only if
\begin{align*}
    \left[
    \begin{array}{ccccc}
	V_{11} + \frac{1}{2} V_{26} & V_{12} - \frac{1}{2} V_{26} & V_{13} & 0 & V_{35} \\
	\frac{1}{2} V_{16} + V_{22} & -\frac{1}{2} V_{16} + V_{21} & V_{23} & 0 & V_{34} \\
	0 & 0 & V_{31}+V_{32} & V_{33} & V_{15} + V_{24}
    \end{array} \right] 
    \left[
    \begin{array}{c}
	a \\ b \\ c \\ d \\ e
    \end{array}
\right]
= \left[
\begin{array}{c}
    0 \\ 0 \\ 0
\end{array}
\right].
\end{align*}
The two additional quadratic solutions can be chosen as follows:
\begin{itemize}
    \item $\bu^{(7)}$ is constructed so that 
	\begin{align*}
	    \left[
		\begin{array}{ccccc}
		    V_{11} + \frac{1}{2} V_{26} & V_{12} - \frac{1}{2} V_{26} & V_{13} & 0 & V_{35} \\
		    \frac{1}{2} V_{16} + V_{22} & -\frac{1}{2} V_{16} + V_{21} & V_{23} & 0 & V_{34} \\
		    0 & 0 & V_{31}+V_{32} & V_{33} & V_{15} + V_{24}
	    \end{array} \right] = \left[ \begin{array}{ccccc}
		    -b & a & 0 & 0 & 0 \\
		    0 & -c & b & 0 & 0 \\
		    0 & 0 & -d & c & 0
	    \end{array} \right],
	\end{align*}
	all other coefficients $V_{ij}$ being set to zero. One possible solution of this is 
	\begin{align*}
	    (V_{12}, V_{11}, V_{23}, V_{33}, V_{21}, V_{31}) = (a,-b,b,c,-c,-d), 
	\end{align*}
	(all other coefficients set to zero) which, upon using \eqref{eq:Pinv}, yields the displacement field
	\begin{align*}
	    \bu^{(7)} = (-bx^2-2cxy-ay^2-2dxz, cx^2 + 2axy - bz^2, dx^2 + 2byz + cz^2).
	\end{align*}
    \item $\bu^{(8)}$ is constructed so that 	
	\begin{align*}
	    \left[
		\begin{array}{ccccc}
		    V_{11} + \frac{1}{2} V_{26} & V_{12} - \frac{1}{2} V_{26} & V_{13} & 0 & V_{35} \\
		    \frac{1}{2} V_{16} + V_{22} & -\frac{1}{2} V_{16} + V_{21} & V_{23} & 0 & V_{34} \\
		    0 & 0 & V_{31}+V_{32} & V_{33} & V_{15} + V_{24}
	    \end{array} \right] = \left[ \begin{array}{ccccc}
		    0 & 0 & 0 & 0 & 0 \\
		    0 & 0 & 0 & 0 & 0 \\
		    0 & 0 & 0 & -e & d
	    \end{array} \right],
	\end{align*}
	all other coefficients $V_{ij}$ being set to zero. One possible solution is
	\begin{align*}
	    (V_{33}, V_{15}) = (-e,d),
	\end{align*}
	(all other coefficients being set to zero) which, upon using \eqref{eq:Pinv}, yields the displacement field
	\begin{align*}
	    \bu^{(8)} = (0,0,dx^2 - ez^2). 
	\end{align*}
\end{itemize}
$\bu^{(7)}$ and $\bu^{(8)}$ were constructed to be solutions of the system of elasticity, and the orthogonality constraints they generate saturate a hyperplane of $\Rm^5$. Satisfying such orthogonality constraints, $(a,b,c,d,e)$ are uniquely determined up to a constant, which is in turn determined following the approach to reconstruct $\tau$ in the general case (see Sec. \ref{sec:rectau}).

\subsection{Proof of Theorem \ref{thm:recons}}

\subsubsection{Preliminaries: forward theory and interior regularity}\label{sec:prelim}

Here and below, if $\bu$ denotes a vector-valued function, we will loosely write $\bu\in H^s(X)$ to mean that $\bu$ has components in the Sobolev space $H^s(X)$. In order to prove Theorem \ref{thm:recons}, we will use the following theorem (see e.g. \cite[Chap. 6, Th. 1.11]{Marsden1983}): if $C$ is smooth, {\em uniformly pointwise stable} (as defined in Eq. \eqref{eq:UPS}) and $\partial X$ is smooth, then for every $f\in L^2(X)$ there exists a unique solution $\bfu\in H^2(X)$ to the problem 
\begin{align}
    \nabla\cdot (C\nabla\bfu) = \bff\quad (X), \quad \bfu|_{\partial X} = 0.
    \label{eq:elasticityf}
\end{align}
If $\bff\in H^s(X)$ then $\bfu\in H^{s+2}(X)$ for $s\ge 0$ with an estimate of the form
\begin{align}
    \|\bfu\|_{H^{s+2}(X)} \le C \|\bff\|_{H^{s}(X)}. 
    \label{eq:fwdellest}
\end{align}
Such estimates also hold in Sobolev norms $W^{s,p}$ for every $1<p<\infty$. Using a lift operator, if the boundary is smooth and problem \eqref{eq:elasticityf} is replaced by a boundary value problem with $\bff=0$ and $\bfu|_{\partial X} = \bg\in H^{s+\frac{3}{2}}(\partial X)$, one may obtain a similar inequality as \eqref{eq:fwdellest} upon replacing $\|\bff\|_{H^{s}(X)}$ by $\|\bg\|_{H^{s+\frac{3}{2}}(\partial X)}$ .

Additionally, we will use below that when the elasticity tensor $C$ is smooth and uniformly pointwise stable, interior regularity arguments as in \cite[Theorem 1 p309]{evans} (in the case of scalar elliptic PDEs) translate into the following: if $\bu$ solves \eqref{eq:elasticityf}, then for $V\subset\subset U\subset X$, there exists a constant $C_1(U,V,C)$ such that 
\begin{align}
    \|\eps\|_{H^1(V)} &\le C_1 (\|\bfu\|_{L^2(U)} + \|\bff\|_{L^2(U)}) \qquad \left(\eps_{ij} = \frac{1}{2} (\partial_i u_j + \partial_j u_i)\right).
    \label{eq:intreg}
\end{align}

Finally, we note that for any $\Omega\subset X$, the functionals $\F_1:[\C^{1}(\Omega,S_3(\Rm))]^{6}\to \C^{0}(\Omega)$ and $\F_2:[\C^1(\Omega,S_3(\Rm))]^{6+N} \to \C^0(\Omega)$, defined by 
    \begin{align}
	\{\eps^{(j)}\}_{1\le j\le 6} &\mapsto \F_1(\eps^{(1)},\dots,\eps^{(6)}) = \det_V (\eps^{(1)}, \dots, \eps^{(6)}), \label{eq:F1}\\
	\{\eps^{(j)}\}_{1\le j\le 6+N} &\mapsto \F_2(\eps^{(1)},\dots,\eps^{(6+N)}) = \sum_{M'\subset M, \# M' = 20} \N(M'):\N(M'), \label{eq:F2}
    \end{align}
    are continuous as polynomials of the entries of the strain tensors and their first partial derivatives.

\subsubsection{Near-constant elasticity tensors}

We now use the constructions from the constant coefficient problem in Sec. \ref{ssec:constant} as well as the continuity of displacement fields with respect to the elasticity coefficients in appropriate norms, to establish reconstructibility of near-constant tensors. 

\begin{proof}[Proof of Theorem \ref{thm:recons}(i)] For two displacement fields $\bu, \bu'$ solutions of \eqref{eq:elasticity} with respective elasticity tensors $C,C'$ (both uniformly pointwise stable and at least $\C^3$-smooth) and the same boundary condition $\bg\in H^{\frac{5}{2}}(\partial X)$, the difference $\bu-\bu'$ satisfies the following PDE
    \begin{align*}
	\nabla\cdot (C \nabla(\bu-\bu')) = \nabla\cdot ((C'-C)\bu') \qquad (X), \qquad (\bu-\bu')|_{\partial X} = 0,  
    \end{align*}
    so that, using \eqref{eq:fwdellest} with $s=2$, we obtain that 
    \begin{align*}
	\|\eps-\eps'\|_{H^3(X)} \le K \|C-C'\|_{\C^3(X)} \|\bu'\|_{H^3(X)} \le K' \|C-C'\|_{\C^3(X)} \|\bg\|_{H^{\frac{5}{2}}(\partial X)},
    \end{align*}
    for some constants $K,K'$. By Sobolev inequality $H^3(X)\to \C^{1,\frac{1}{2}}(\overline{X})\subset \C^1(\overline{X})$, we arrive at the estimate
    \begin{align}
	\|\eps-\eps'\|_{\C^1(\overline{X})} \le K \|C-C'\|_{\C^3(X)}. 
	\label{eq:Cest}
    \end{align}

    Now pick $C' = C_0$ a tensor with constant coefficients and construct solutions $\{\bu_0^{(j)}\}_{j=1}^{6+N}$ satisfying hypotheses \ref{hyp}, i.e. $\F_1 (\eps_0^{(1)}, \dots, \eps_0^{(6)})$ and $\F_2(\eps_0^{(1)}, \dots, \eps_0^{(6+N)})$ are bounded away from zero throughout $X$. We then see that combining the continuity of $\F_1, \F_2$ in \eqref{eq:F1}-\eqref{eq:F2} with estimate \eqref{eq:Cest} applied to $\eps^{(1)}, \dots, \eps^{(6+N)}$ shows that, if an elasticity tensor $C$ is close enough to $C_0$ in $\C^3(X)$-norm, then upon defining $\{\bu^{(j)}\}_{j=1}^{6+N}$ solutions of $\nabla\cdot(C\nabla\bu^{(j)}) = 0$ with boundary condition $\bu^{(j)}|_{\partial X} = \bu_0^{(j)}|_{\partial X}$, the difference $\|\eps^{(j)}-\eps_0^{(j)}\|_{\C^1(\overline{X})}$ should be so small that $\F_1 (\eps^{(1)}, \dots, \eps^{(6)})$ and $\F_2(\eps^{(1)}, \dots, \eps^{(6+N)})$ remain bounded away from zero throughout $X$, so that $C$ can be explicitely reconstructed from these strain tensors. Theorem \ref{thm:recons}(i) is proved.     
\end{proof}

\subsubsection{Runge approximation} \label{sec:runge}
 
We say that a differential operator $\L$ satisfies the Runge approximation property on $X$ if for every compact $\Omega\subset X$, every solution $v$ of $\L v = 0\; (\Omega)$ can be approximated in $L^2(\Omega)$ by solutions $u$ of $\L u = 0\; (X)$. Other approximation topologies can be used (e.g. $H^1 (\Omega)$ in \cite[Theorem 3.3]{Nakamura2005}), in fact here we will use a $\C^2$ approximation. The Runge approximation property is closely related to the unique continuation property, see e.g. \cite{Lax1956}.

Unlike for scalar elliptic PDE's, the literature on unique continuation results for elliptic systems is sparse. In the context of elasticity, it has been proved for the Lam\'e system (and more generally for elliptic systems with an iterated Laplacian as diagonal part) using doubling inequalities in \cite{Alessandrini2008}, and for the transversely isotropic case in \cite{Nakamura2005}.

When the Runge approximation is available, following ideas from the first and third author presented in \cite{Bal2012} and generalized in other contexts in e.g. \cite{Monard2012a,Bal2013}, one becomes able to fulfill the hypotheses of reconstructibility \ref{hyp} by constructing local solutions satisfying appropriate qualitative behavior. This is what we now do in the proof of Theorem \ref{thm:recons}(ii).

\begin{proof}[Proof of Theorem \ref{thm:recons}(ii)] We decompose the proof in three steps.  
    \paragraph{Step 1. Local solutions with constant coefficients:} 
    Fix $\x_0\in X$ and $B_{3r} \equiv B_{3r}(\x_0)\subset X$ a ball of radius $3r$ ($r$ tuned hereafter) centered at $\x_0$, and denote $C_0 := C(\x_0)$. Following the approach in section \ref{ssec:constant}, we first construct solutions to the problem with frozen constant coefficients, i.e. such solutions, which we denote $\{\bfu_0^{(j)}\}_{j=1}^{6+N}$ (each with strain tensor $\eps_0^{(j)}$), solve 
    \begin{align}
	\nabla\cdot(C_0 \eps_0^{(j)}) = 0\quad (\Rm^3), \qquad 1\le j\le 6+N.
	\label{eq:frozensol}
    \end{align}
    This family is constructed so that Hypotheses \ref{hyp}.A-B are satisfied throughout $B_{3r}$ (in fact, they fulfill these hypotheses globally). 
    
    \paragraph{Step 2. Local solutions with varying coefficients:}
    From solutions $\{\bfu_{0}^{(j)}\}_{j=1}^{6+N}$, we construct a second family of solutions $\{\bfu_{r}^{(j)}\}_{j=1}^{6+N}$ (each with strain tensor $\eps_{r}^{(j)}$) via the following equation
    \begin{align}
	\nabla\cdot(C \eps_r^{(j)}) = 0\quad (B_{3r}), \qquad \bfu_r^{(j)} |_{\partial B_{3r}} = \bfu_0^{(j)}, \qquad 1\le j\le 6+N.
	\label{eq:localsol}
    \end{align}
    
    The difference of both solutions satisfies, for $1\le j\le 6+N$,
    \begin{align}
	\nabla\cdot(C (\eps_r^{(j)} - \eps_0^{(j)}) ) = \nabla\cdot ((C_0-C)\eps_0^{(j)}) \quad (B_{3r}), \quad (\bfu_r^{(j)}-\bfu_0^{(j)})|_{\partial B_{3r}} = 0,
	\label{eq:localdiff}
    \end{align}
    where the right-hand side is smooth since $C$ is assumed to be smooth and so is $\eps_0^{(j)}$. Then using estimate \eqref{eq:fwdellest} in high enough Sobolev norms combined with Sobolev inequalities \cite[Th. 6 p270]{evans}, we arrive at estimates of the form 
    \begin{align*}
	\|\bfu_r^{(j)} - \bfu_0^{(j)} \|_{\C^{2,\frac{1}{2}}(B_{3r})} \le C \|\bfu_r^{(j)} - \bfu_0^{(j)} \|_{H^4(B_{3r})} \le C' \|\nabla\cdot ((C_0-C)\eps_0^{(j)})\|_{H^2(B_{3r})},
    \end{align*}
    so that
    \begin{align}
	\lim_{r\to 0} \quad  \max_{1\le j\le 6+N} \|\bfu_r^{(j)} - \bfu_0^{(j)} \|_{\C^2(B_{3r})} = 0.
	\label{eq:localestimate}
    \end{align}

    \paragraph{Step 3. Runge approximation (control from the boundary $\partial X$):} Assume $r$ has been fixed at this stage. By virtue of the Runge approximation property, for every $\delta >0$ and $1\le j\le 6+N$, there exists $\bfg_\delta^{(j)}\in H^{\frac{1}{2}}(\partial X)$ such that 
    \begin{align}	
	\|\bfu_\delta^{(j)} - \bfu_r^{(j)}\|_{L^2(B_{3r})} \le \delta, \where\; \bfu_\delta^{(j)} \text{ solves } \eqref{eq:elasticity} \text{ with } \bfu_\delta^{(j)}|_{\partial X} = \bfg_\delta^{(j)}.
	\label{eq:L2est}
    \end{align}
    Since $C$ is smooth and $\nabla\cdot(C(\eps_\delta^{(j)} - \eps_r^{(j)}))=0$ thoughout $B_{3r}$, estimate \eqref{eq:intreg} with $\bff=0$ implies that
    \begin{align*}
	\|\eps^{(j)}_\delta-\eps^{(j)}_r\|_{H^1(B_{2r})} \le C_1(r) \|\bfu_\delta^{(j)} - \bfu_{r}^{(j)}\|_{L^2(B_{3r})}.
    \end{align*}
    Moreover, using that $\nabla\cdot(C \partial_{pq} (\eps_\delta^{(j)} - \eps_r^{(j)})) = - \nabla\cdot( (\partial_{pq}C) (\eps_\delta^{(j)} - \eps_r^{(j)}))$ for every $1\le p\le q\le 3$, and assuming that $C$ is at least of class $\C^3$, we can find a constant $C_2(r,C)$ (we drop the dependency on $C$ below) such that 
    \begin{align*}
	\|\eps^{(j)}_\delta-\eps^{(j)}_r\|_{H^3(B_{r})} \le C_2(r,C) \|\eps^{(j)}_\delta-\eps^{(j)}_r\|_{H^1(B_{2r})} \le C_3(r) \|\bfu_\delta^{(j)} - \bfu_{r}^{(j)}\|_{L^2(B_{3r})}\le C_3(r)\delta.
    \end{align*}
    By Sobolev inequality $H^3\to \C^{1,\frac{1}{2}}\subset \C^1$, we deduce that 
    \begin{align*}
	\|\eps^{(j)}_\delta-\eps^{(j)}_r\|_{\C^1(B_r)} \le C_4(r)\delta, \qquad 1\le j\le 6+N.
    \end{align*}
    Since $r$ is fixed at this stage, we deduce that 
    \begin{align}
	\lim_{\delta\to 0} \quad \max_{1\le j\le 6+N} \|\eps_\delta^{(j)} - \eps_r^{(j)} \|_{\C^1(B_r)} = 0.
	\label{eq:epsilonestimate}
    \end{align}
    
    \paragraph{Completion of the argument:} Theorem \ref{thm:injstab} states that $C$ is reconstructible if both functionals $\F_1,\F_2$ are bounded away from zero for some properly chosen solutions. Fixing again $\x_0\in X$ and $B_r$, as above, Step 1 established that $\F_1(\eps_0^{(1)}, \dots, \eps_0^{(6)})$ and $\F_2(\eps_0^{(1)}, \dots, \eps_0^{(6+N)})$ were bounded away from zero over $B_r$, where the $\eps_0^{(j)}$'s were strain tensors associated with a problem with frozen coefficients. Due to limits \eqref{eq:localestimate} and \eqref{eq:epsilonestimate}, there exists a small $r>0$, then a small $\delta>0$ and local solutions $(\bu_\delta^{(1)}, \dots, \bu_\delta^{(6+N)})$ such that $\max_{1\le j\le 6+N} \|\eps^{(j)}_\delta-\eps_0^{(j)}\|_{\C^1(B_r(x_0))}$ is so small that, by continuity of $\F_1$ and $\F_2$ mentioned above, $\F_1(\eps_\delta^{(1)}, \dots, \eps_\delta^{(6)})$ and $\F_2(\eps_\delta^{(1)}, \dots, \eps_\delta^{(6+N)})$ remain uniformly bounded away from zero over $B_r$. Hypotheses \ref{hyp} are thus satisfied over $B_r$ by the family $\{\bu_\delta^{(j)}\}_{j=1}^{6+N}$ which is controlled by boundary conditions. $C$ is thus reconstructible over $B_r$ with a local stability as in Theorem \ref{thm:injstab}. The proof of Theorem \ref{thm:recons}(ii) is complete.
\end{proof}

\subsection*{Acknowledgements} GB is partially supported by NSF grant DMS-1408867, FM by NSF grant DMS-1025372, while GU acknowledges support from NSF. FM would like to thank C\'edric Bellis for pointing out reference \cite{Mehrabadi1990}.

\bibliographystyle{siam}
%\bibliography{./bibliography,./biblio2}

\begin{thebibliography}{10}

\bibitem{Alessandrini2008}
{\sc G.~Alessandrini, A.~Morassi, E.~Rosset, and S.~Vessella}, {\em On doubling
  inequalities for elliptic systems},  (2008).
\newblock arXiv:0810.0115.

\bibitem{A-Sp-08}
{\sc H.~Ammari}, {\em An Introduction to Mathematics of Emerging Biomedical
  Imaging}, vol.~62 of Mathematics and Applications, Springer, New York, 2008.

\bibitem{AGKL-QAM-08}
{\sc H.~Ammari, P.~Garapon, H.~Kang, and H.~Lee}, {\em A method of biological
  tissues elasticity reconstruction using magnetic resonance elastography
  measurements}, Quar. Appl. Math., 66 (2008), pp.~139--175.

\bibitem{AS-IP-12}
{\sc S.~R. Arridge and O.~Scherzer}, {\em Imaging from coupled physics},
  Inverse Problems, 28 (2012), p.~080201.

\bibitem{B-CM-14}
{\sc G.~Bal}, {\em {Hybrid Inverse Problems and Redundant Systems of Partial
  Differential Equations}}, in Inverse Problems and Applications, P.~Stefanov,
  A.~Vasy, and M.~Zworski, eds., vol.~619 of Contemporary Mathematics, AMS,
  2014, pp.~15--48.

\bibitem{Bal2013b}
{\sc G.~Bal, C.~Bellis, S.~Imperiale, and F.~Monard}, {\em Reconstruction of
  constitutive parameters in isotropic linear elasticity from full-field
  measurements}, Inverse Problems, 30 (2014), p.~125004.
\newblock %arXiv:1310.5131.

\bibitem{Bal2011a}
{\sc G.~Bal, E.~Bonnetier, F.~Monard, and F.~Triki}, {\em Inverse diffusion
  from knowledge of power densities}, Inverse Problems and Imaging, 7 (2013),
  pp.~353--375.
\newblock %arXiv:1110.4577.

\bibitem{Bal2013d}
{\sc G.~Bal and C.~Guo}, {\em Reconstruction of complex-valued tensors in the
  maxwell system from knowledge of internal magnetic fields}, Inv. Probl.
  Imaging (to appear),  (2014).
\newblock arXiv:1308.5872.

\bibitem{Bal2013}
{\sc G.~Bal, C.~Guo, and F.~Monard}, {\em Inverse anisotropic conductivity from
  internal current densities}, Inverse Problems, 30 (2014), p.~025001.

\bibitem{Bal2012e}
\leavevmode\vrule height 2pt depth -1.6pt width 23pt, {\em Linearized internal
  functionals for anisotropic conductivities}, Inv. Probl. and Imaging, 8
  (2014), pp.~pp. 1--22.

\bibitem{BI-SIIMS-14}
{\sc G.~Bal and S.~Imperiale}, {\em Local asymptotic analysis of transient
  elastography reconstructions}, submitted,  (2014).

\bibitem{Bal2012a}
{\sc G.~Bal and G.~Uhlmann}, {\em Reconstructions for some coupled-physics
  inverse problems}, Applied Mathematics Letters, 25 (2012), pp.~1030--1033.

\bibitem{Bal2012}
\leavevmode\vrule height 2pt depth -1.6pt width 23pt, {\em Reconstruction of
  coefficients in scalar second-order elliptic equations from knowledge of
  their solutions}, C.P.A.M., 66 (2013), pp.~1629--1652.
\newblock %arXiv:1111.5051.

\bibitem{Bal2014a}
{\sc G.~Bal and T.~Zhou}, {\em Hybrid inverse problems for a system of
  {M}axwell's equations}, Inverse Problems, 30 (2014), p.~055013.

\bibitem{Barbone2010}
{\sc P.~E. Barbone, C.~E. Rivas, I.~Harari, U.~Albocher, A.~A. Oberai, and
  Y.~Zhang}, {\em Adjoint-weighted variational formulation for the direct
  solution of inverse problems of general linear elasticity with full interior
  data}, Int. J. Numer. Meth. Engng, 81 (2010), pp.~1713--1736.

\bibitem{BFV-IPI-14}
{\sc Beretta, E. and Francini E. and Vessella S.}, {\em Uniqueness and Lipschitz stability for the identification of Lam\'e parameters from boundary measurements}, Inverse Problems Imaging, 8 (2014), pp.~611-644.

%Beretta Elena; Francini Elisa; Vessella Sergio (2014). Uniqueness and Lipschitz stability for the identification of Lamé parameters from boundary measurements. INVERSE PROBLEMS AND IMAGING, vol. 8 (n.3), pp. 611-644

\bibitem{evans}
{\sc L.~Evans}, {\em Partial Differential Equations}, Graduate Studies in
  Mathematics Vol.19, AMS, 1998.

\bibitem{Feng2013}
{\sc Y.~Feng, R.~J. Okamoto, R.~Namani, G.~M. Genin, and P.~V. Bayly}, {\em
  Measurements of mechanical anisotropy in brain tissue and implications for
  transversely isotropic material models of white matter}, J. Mech. Behav.
  Biomed. Mater., 23 (2013), pp.~117--132.

\bibitem{Gennisson2010}
{\sc J.-L. Gennisson, T.~Deffieux, E.~Mac\'e, G.~Montaldo, M.~Fink, and
  M.~Tanter}, {\em Viscoelastic and anisotropic mechanical properties of in
  vivo muscle tissue assessed by supersonic shear imaging}, Ultrasound in
  Medicine and Biology, 36 (2010), pp.~789--801.

\bibitem{GCCF-JASA-03}
{\sc J.-L. Gennisson, S.~Catheline, S.~Chaffai, and M.~Fink}, {\em {Transient
  elastography in anisotropic medium: Application to the measurement of slow
  and fast shear wave speeds in muscles}}, J. Acoust. Soc. Am., 114 (2003),
  pp.~536--541.

\bibitem{JFK-SIAP-11}
{\sc Y.~Jiang, H.~Fujiwara, and G.~Nakamura}, {\em Approximate steady state
  models for magnetic resonance elastography}, SIAM J. Appl. Math., 71 (2011),
  pp.~1965--1989.

\bibitem{Kal-PMB-00}
{\sc S.~A. Kruse et~al.}, {\em Tissue characterization using magnetic resonance
  elastography: preliminary results}, Phys. Med. Biol., 45 (2000),
  pp.~1579--1590.

\bibitem{KS-IP-12}
{\sc P.~Kuchment and D.~Steinhauer}, {\em Stabilizing inverse problems by
  internal data}, Inverse Problems, 28 (2012), p.~084007.

\bibitem{Lai2013}
{\sc R.-Y. Lai}, {\em Uniqueness and stability of {L}am\'e parameters in
  elastography}, Journal of Spectral Theory, (to appear) (2014).
\newblock arXiv:1301.4531.

\bibitem{Lax1956}
{\sc P.~D. Lax}, {\em A stability theorem for solutions of abstract
  differential equations, and its application to the study of local behavior of
  solutions of elliptic equations}, Communications on Pure and Applied
  Mathematics, IX (1956), pp.~747--766.

\bibitem{Marsden1983}
{\sc J.~E. Marsden and T.~J.~R. Hughes}, {\em {Mathematical foundations of
  elasticity}}, Prentice--Hall, 1983.

\bibitem{MZM-IP-10}
{\sc J.~R. McLaughlin, N.~Zhang, and A.~Manduca}, {\em Calculating tissue shear
  modulus and pressure by 2{D} log-elastographic methods}, Inverse Problems, 26
  (2010), pp.~085007, 25.

\bibitem{Mehrabadi1990}
{\sc M.~M. Mehrabadi and S.~C. Cowin}, {\em Eigentensors of linear anisotropic
  elastic materials}, Q. JI Mech. Appl. Math., 43 (1990).

\bibitem{Monard2012a}
{\sc F.~Monard and G.~Bal}, {\em Inverse anisotropic conductivity from power
  densities in dimension $n \ge 3$}, Comm. PDE, 38 (2013), pp.~1183--1207.

\bibitem{MLRGE-S-95}
{\sc R.~Muthupillai, D.~J. Lomas, P.~J. Rossman, J.~F. Greenleaf, A.~Manduca,
  and R.~L. Ehman}, {\em {Magnetic resonance elastography by direct
  visualization of propagating acoustic strain waves}}, Science, 269 (1995),
  pp.~1854--1857.

\bibitem{Nakamura2005}
{\sc G.~Nakamura, G.~Uhlmann, and J.-N. Wang}, {\em Oscillating-decaying
  solutions, {R}unge approximation and its applications to inverse problems},
  Journal de Math\'ematiques Pures et Appliqu\'ees, 85 (2005), pp.~21--54.

\bibitem{OCPYL-UI-91}
{\sc J.~Ophir, I.~C{\'e}spedes, H.~Ponnekanti, Y.~Yazdi, and X.~Li}, {\em
  Elastography: A quantitative method for imaging the elasticity of biological
  tissues}, Ultrasonic Imaging, 13 (1991), pp.~111--134.

\bibitem{Oetal-JMU-02}
{\sc J.~Ophir et~al.}, {\em Elastography: Imaging the elastic properties of
  soft tissues with ultrasound}, J. Med. Ultrasonics, 29 (2002), pp.~155--171.

\bibitem{STCF-IEEE-02}
{\sc L.~Sandrin, M.~Tanter, S.~Catheline, and M.~Fink}, {\em {Shear Modulus
  Imaging with 2-D Transient Elastography}}, IEEE transactions on ultrasonics,
  ferroelectrics, and frequency control, 49 (2002), pp.~426--435.

\bibitem{S-SP-2011}
{\sc O.~Scherzer}, {\em Handbook of Mathematical Methods in Imaging}, Springer
  Verlag, New York, 2011.

\end{thebibliography}
%\end{document}

\end{document}